\documentclass[11pt]{article}

\usepackage{amsmath,amssymb,xfrac}
\usepackage{algorithm,algorithmic}
\usepackage{url,hyperref,cleveref}
\usepackage{authblk}
\usepackage{caption,subcaption}
\usepackage[margin=1in]{geometry}
\usepackage{tikz}
\usetikzlibrary{arrows,calc,positioning,shapes.misc}
\usepackage[square,numbers]{natbib}
\bibliographystyle{plainnat}

\usepackage{amsthm}
\newtheorem{theorem}{Theorem}
\newtheorem{proposition}{Proposition}

\newtheorem*{theorem*}{Theorem}
\newtheorem*{proposition*}{Proposition}
\newtheorem*{lemma*}{Lemma}
\newtheorem*{corollary*}{Corollary}
\newtheorem{assumption}{Assumption}
\theoremstyle{definition}
\newtheorem{definition}{Definition}

\newcommand{\mbb}{\mathbb}

\newcommand{\mc}{\mathcal}
\newcommand{\st}{\mathrm{s.t.}\;}

\newcommand{\defn}{\equiv}
\newcommand{\group}[1]{\left( #1 \right)}

\newcommand{\set}[1]{\left\{ #1 \right\}}

\renewcommand{\hat}{\widehat}
\renewcommand{\tilde}{\widetilde}
\renewcommand{\emptyset}{\varnothing}
\DeclareMathOperator{\smallsum}{\textstyle{\sum}}

\newcommand{\allg}{{g}}

\title{Computing normalized Nash equilibria for generalized Nash games with nonconvex players%
\thanks{This preprint has not undergone peer review (when applicable) or any post-submission improvements or corrections.
The Version of Record of this article is published in Annals of Operations Research, and is available online at https://doi.org/10.1007/s10479-025-06968-z.}
}
\date{\today}

\author[1]{Stuart M. Harwood}
\author[1]{Dimitri J. Papageorgiou}

\affil[1]{ExxonMobil Technology and Engineering Company, \protect\\ 1545 Route 22 East, Annandale, NJ 08801 USA}

\begin{document}
\maketitle

\begin{abstract}
Generalized Nash equilibrium (GNE) is a solution concept for complete information games, in which each player's objective function \emph{and} feasible region depend on other players' actions. While numerical methods for finding GNE when players possess convex structure are relatively mature, the same cannot be said when players optimize nonconvex objective functions over nonconvex feasible regions.  Drawing inspiration from the notion of a normalized (or variational) Nash equilibrium, which is a more restrictive class of solutions to generalized Nash games, we extend the ideas of \cite{harwood2024equilibrium} to develop an exact method that can find a normalized Nash equilibrium (NNE) of a problem, when such an NNE exists. By adapting the framework of \cite{harwood2024equilibrium}, we are able to find NNE without any convexity assumptions.  We demonstrate the effectiveness of our method on several nonconvex games. 

\vspace{2mm}
\noindent \textbf{keywords:} bilevel optimization, equilibrium modeling, game theory, nonconvexity, semi-infinite programming. 
\end{abstract}
\section{Introduction}
\label{sec:intro}

Generalized Nash equilibrium (GNE) is a solution concept for complete information games, in which the objective function \emph{and} decision set/space for each player depend on the action of the other players.
Generalized Nash games and GNE have been applied in many areas, including electricity markets \citep{wu2020energy}, energy markets \citep{abada2013generalized,liang2019generalized}, transport \citep{zhou2005generalized}, environment \citep{krawczyk05,wang2019generalized}, and, famously, models of economies \citep{arrowEA54}.
For the most part, numerical methods for finding GNE require the players to have a certain amount of convex structure;
see e.g.~the review \cite{facchinei2010generalized}, as well as \cite{dreves2018select,dreves2019algorithm,dreves2012nonsmooth,vonheusingerEA12}.
Some extensions to specially-structured nonconvex problems have been proposed in \cite{sagratella2019generalized,harks2024generalized}.
In contrast, the recent work by \citet{harwood2024equilibrium} introduced methods for modeling and solving pure Nash equilibrium (PNE) problems that did not not require convexity assumptions.


Extending these new methods to the GNE case presents certain challenges.
Assuming that, for instance, the decision set of each player can be represented as a set of algebraic inequality constraints with parametric dependence on the other players' decisions, one framework for extending methods for PNE to GNE involves penalizing the parametric constraints, thereby reducing the problem in finding a GNE to a problem in finding a PNE.
This characterizes the approaches from \cite{ba2022exact,facchinei2010penalty}, for instance.
Exact penalization of optimization problems typically requires convexity, however, and the work from \cite{ba2022exact,facchinei2010penalty} is no different when applying this to the GNE problem.

Instead, we draw inspiration from the notion of a normalized (or variational) Nash equilibrium, which is a more restrictive class of solutions to generalized Nash games.
In this work, we extend the ideas of \cite{harwood2024equilibrium} to develop a method that can find a normalized Nash equilibrium (NNE) of a problem, when such an NNE exists.
By adapting the framework of \cite{harwood2024equilibrium}, we are able to find NNE without any convexity assumptions.

In order to find a PNE, the central approach of \cite{harwood2024equilibrium} is to instead find a \emph{minimum disequilibrium} point.
While such a point has a couple interesting properties, the main one is that if ``disequilibrium'' is zero, then the point is in fact a PNE.
The problem of minimizing disequilibrium is equivalent to a semi-infinite program, or a mathematical optimization problem with an infinite (and potentially uncountably so) number of constraints.
Fairly mature constraint generation methods have been developed that can solve SIP under mild assumptions
(specifically, without the need for convex structure \citep{blankenshipEA76}).
The challenge of adapting this approach to generalized Nash games is that the problem of finding a minimum disequilibrium becomes a generalized semi-infinite program (GSIP).
Especially in instances without convex structure, solving a GSIP can be quite difficult \citep{guerravazquezEA08}.


The main realization that inspires this work is that in generalized games, NNE are a type of minimum disequilibrium solution that may be found by solving an SIP.
This is fitting, as NNE have had more favorable numerical properties since the introduction of the term in \cite{rosen1965existence}, where a gradient-based procedure was proposed to find NNE of convex games.
From our perspective (that SIP are easier to solve than GSIP), NNE continue to fit this trend.

Normalized Nash equilibrium has been used in favor of GNE in a number of settings, e.g., \cite{dreves2018generalized,ghosh2015normalized,le2022parametrized,wei2015charging},
where NNE is favored for its numerical properties as well as its economic interpretation.
Consequently, the value of NNE has motivated work on numerical methods to find them: 
\citet{dreves2013globalized} propose a globalized Newton method for the computation of normalized Nash equilibria;
\citet{vonheusingerEA12} approach the problem of finding an NNE through a fixed-point formulation.
As noted before, however, these methods require convex structure.



Finally, we reiterate the general setting of this work;
we allow for each player's objective function to be nonconvex, and their decision set to be nonconvex, including the case that certain variables are constrained to be discrete.
Thus, even though the original definition of NNE by \citet{rosen1965existence} required the existence of Lagrange multipliers
(that is, it assumes continuous and more so convex players),
we develop a method that find an NNE in a meaningfully generalized setting.

The rest of this work is organized as follows.
The definition of GNE and its characterization as a solution of a mathematical optimization problem is recalled in \Cref{sec:recap}.
We briefly elaborate on the challenges of solving this optimization problem in \Cref{sec:md_for_gne}.
In \Cref{sec:nne}, we introduce normalized Nash equilibrium and, more generally, minimum normalized disequilibrium as the central notion of solution on which we focus.
Further, we propose an algorithm for finding minimum normalized disequilibrium.
In \Cref{sec:examples}, we demonstrate the effectiveness of this method on some examples.
This includes a computational study of a class of games involving ``knapsack'' players;
we get the somewhat surprising result that these games consistently have equilibria across a range of parameters and data.
We conclude in \Cref{sec:conclusions}.
\section{Definition and characterization of equilibrium}
\label{sec:recap}
The basic definitions and characterization of pure Nash equilibrium from \cite{harwood2024equilibrium} are trivially extended to the generalized case;
see also \cite{harwood2022equilibrium}.
In this section we recall these concepts.

\subsection{Definition of equilibrium}
Consider the (nonempty) set 
\[
	G \subset \mbb{R}^{n_0} \times \mbb{R}^{n_1} \times \dots \times \mbb{R}^{n_m},
\] 
and collection of $m$ player/agent optimization problems parametric in $x \in \mbb{R}^{n_0}$;
for $ i \in \set{1, \dots, m}$, player $i$ is defined by
\begin{equation}
\label{agent_i}
\tag{$\mathcal{A}_i$}
\begin{aligned}
S_i(x) \defn \arg
\min_{y_i}\; & g_i(x,y_i) \\
\st
& (x,y_i) \in F_i
\end{aligned}
\end{equation}
where
$F_i \subset \mbb{R}^{n_0} \times \mbb{R}^{n_i}$
and
$g_i : F_i \to \mbb{R}$.
Also important is the optimal value function of problem~\eqref{agent_i}
\begin{equation}
\label{g_star}
g^*_i(x) \defn \inf_{y_i} \set{ g_i(x,y_i) : (x,y_i) \in F_i }.
\end{equation}
As usual, for any $i$, define 
$g^*_i(x) = +\infty$ if optimization problem~\eqref{agent_i} is infeasible, and 
$g^*_i(x) = -\infty$ if the problem is unbounded.

We have the following definition of a generalized Nash equilibrium (GNE).
Although this definition and overall form of the game is different from previous definitions \citep{facchinei2010generalized}, the two formulations have equivalent modeling power;
see \cite[\S2.3]{harwood2022equilibrium}, specifically Propositions~1 and 2.

\begin{definition}
\label{defn:gne}
A point $(x^*,y_1^*,\dots,y_m^*)$ is a GNE of the collection of \eqref{agent_i} with respect to $G$ if
$(x^*,y_1^*,\dots,y_m^*) \in G$ and $y_i^* \in S_i(x^*)$ for each $i$.
\end{definition}

\noindent
\textbf{Notation.} We will sometimes write
\[
	(y_1, \dots ,y_m) = y \in \mbb{R}^{\group{\sum_i n_i}}.
\]
More generally, a symbol without a subscript refers to a tuple/block vector of the subscripted objects;
for example we have $z = (z_1, z_2, \dots, z_J)$ for some vectors or scalars $z_j$. 
\subsection{Characterization of equilibrium and minimum disequilibrium}
The following result characterizes equilibrium as the solution of an optimization problem.
The objective can be thought of as a measure of ``disequilibrium,'' or roughly the dissatisfaction of all the players in aggregate.
This depends on an $\bar{\mbb{R}}$-valued function $\mu$;
a convenient choice is%
\footnote{We denote the extended-reals by $\bar{\mbb{R}} \defn \mbb{R}\cup\set{-\infty, +\infty}$ and the subset of non-negative extended reals by $\bar{\mbb{R}}_+$.
Extended-real arithmetic proceeds by the usual rules;
we might worry about what to do in the undefined case $\infty - \infty$.
However, this cannot occur in the feasible set of \eqref{md}:
If $(x,y)$ is feasible in \eqref{md}, then for each $i$, $(x, y_i) \in F_i$, and thus player $i$ is feasible and so $g_i^*(x) < +\infty$.
Since $g_i$ is real-valued by assumption, each term $g_i(x,y) - g_i^*(x)$ is strictly greater than $-\infty$, and so an expression like
$\sum_i (g_i(x,y) - g_i^*(x))$ is either finite or $+\infty$.
}
\[
	\mu : (v_1,\dots,v_m) \mapsto \smallsum_{i=1}^m v_i.
\]

\begin{proposition}{\citep[Prop.~3]{harwood2022equilibrium}}
\label{prop:equilibrium}
Let $\mu : \bar{\mbb{R}}^m \to \bar{\mbb{R}}$ be any function satisfying:
\begin{enumerate}\itemsep0pt \parskip0pt
\item if $v \in \bar{\mbb{R}}^m_+$ then $\mu(v) \ge 0$;
\item $v \in \bar{\mbb{R}}^m_+$ and $\mu(v) = 0$ if and only if $v_i = 0$ for all $i$.
\end{enumerate}
Consider
\begin{equation}
\label{md}
\tag{$\mathcal{MD}$}
\begin{aligned}
\delta \defn
\inf_{x,y_1,\dots,y_m}\;
&\mu(\allg(x,y) - \allg^*(x)) \\
\st 
&(x,y_1,\dots,y_m) \in G\\
&(x,y_i) \in F_i, \quad \forall i.
\end{aligned}
\end{equation}
Any $(x^*,y_1^*,\dots, y_m^*)$ is a solution of \eqref{md} with $\delta = 0$ if and only it is a GNE.
\end{proposition}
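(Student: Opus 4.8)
The plan is to derive both implications from a single structural fact: on the feasible set of \eqref{md} the vector $\allg(x,y) - \allg^*(x)$ always lies in $\bar{\mbb{R}}^m_+$, so that the two hypotheses on $\mu$ can be applied componentwise. First I would fix a feasible $(x,y_1,\dots,y_m)$ of \eqref{md}. Feasibility means $(x,y_i) \in F_i$ for each $i$, so $y_i$ is admissible in \eqref{agent_i} and hence $g_i(x,y_i) \ge g_i^*(x)$ by definition of the optimal value function \eqref{g_star}; moreover $g_i^*(x) < +\infty$ since player $i$'s problem is feasible, and $g_i$ is real-valued, so each difference $g_i(x,y_i) - g_i^*(x)$ is a well-defined element of $\bar{\mbb{R}}_+$ (possibly $+\infty$, if player $i$ is unbounded at $x$). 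Thus $\allg(x,y) - \allg^*(x) \in \bar{\mbb{R}}^m_+$, and property~1 of $\mu$ gives $\mu(\allg(x,y) - \allg^*(x)) \ge 0$; taking the infimum over feasible points yields $\delta \ge 0$.

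For the direction ``GNE $\Rightarrow$ solution of \eqref{md} with $\delta = 0$'', suppose $(x^*,y_1^*,\dots,y_m^*)$ is a GNE. By \Cref{defn:gne} we have $(x^*,y_1^*,\dots,y_m^*) \in G$ and $y_i^* \in S_i(x^*)$ for each $i$; the latter unpacks, via \eqref{agent_i}, to $(x^*,y_i^*) \in F_i$ and $g_i(x^*,y_i^*) = g_i^*(x^*)$. Hence the point is feasible in \eqref{md} and $\allg(x^*,y^*) - \allg^*(x^*) = 0 \in \bar{\mbb{R}}^m_+$, so property~2 of $\mu$ gives $\mu(\allg(x^*,y^*) - \allg^*(x^*)) = 0$. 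Combined with $\delta \ge 0$ from the first paragraph, this forces $\delta = 0$ and exhibits $(x^*,y^*)$ as a minimizer of \eqref{md}.

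For the converse, suppose $(x^*,y_1^*,\dots,y_m^*)$ is a solution of \eqref{md} attaining the value $\delta = 0$. Being feasible, it satisfies $(x^*,y^*) \in G$ and $(x^*,y_i^*) \in F_i$ for all $i$, and by the opening observation $v \defn \allg(x^*,y^*) - \allg^*(x^*) \in \bar{\mbb{R}}^m_+$. Since $\mu(v) = \delta = 0$, property~2 of $\mu$ yields $v_i = 0$, i.e.\ $g_i(x^*,y_i^*) = g_i^*(x^*)$, for every $i$; together with $(x^*,y_i^*) \in F_i$ this says $y_i^*$ is both feasible and optimal in \eqref{agent_i}, so $y_i^* \in S_i(x^*)$. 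With $(x^*,y^*) \in G$ already in hand, \Cref{defn:gne} gives that $(x^*,y^*)$ is a GNE.

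The argument is essentially bookkeeping, and I expect no substantive obstacle. The only points requiring care are (i) the extended-real arithmetic, namely confirming that the objective of \eqref{md} is never of the indeterminate form $\infty-\infty$ at a feasible point — which is exactly the content of the footnote accompanying the definition of $\mu$ — and (ii) remembering that the membership $y_i^* \in S_i(x^*)$ simultaneously encodes feasibility ($(x^*,y_i^*)\in F_i$) and optimality ($g_i(x^*,y_i^*)=g_i^*(x^*)$) in \eqref{agent_i}, which is what ties the componentwise conditions back to \Cref{defn:gne}.
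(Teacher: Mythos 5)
Your proof is correct and is the standard argument: the key observation that $\allg(x,y)-\allg^*(x)\in\bar{\mbb{R}}^m_+$ on the feasible set, followed by applying the two properties of $\mu$ in each direction, is exactly how this equivalence is established (the paper itself defers the proof to the cited reference rather than reproducing it). Your handling of the extended-real arithmetic and of the case $g_i^*(x^*)=-\infty$ (which is ruled out by $v_i=0$) is also sound.
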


We note that by the assumptions on $\mu$, we always have $\delta \ge 0$.
We will refer to a solution of \eqref{md} as a \emph{minimum disequilibrium} solution, whether or not it is a GNE.
\citet{harwood2024equilibrium} discuss the value of minimum disequilibrium solutions in pure Nash equilibrium problems.
See also \cite{fullerEA17} for the related concept of minimum total opportunity cost, which is the specialization of minimum disequilibrium in the context of pricing in electricity markets.

\section{Reformulation of minimum disequilibrium}
\label{sec:md_for_gne}
In this section we define an equivalent form of the minimum disequilibrium problem \eqref{md} that will be useful.
Here and in the following sections, define
\[
\hat{F} \defn \set{ (x, y_1, \dots, y_m) : (x,y_i) \in F_i, \forall i}.
\]
We then note that the feasible set of \eqref{md} is $\hat{F} \cap G$.
Define
$\pi_0 : \prod_{i=0}^m \mbb{R}^{n_i} \to \mbb{R}^{n_0}$
as the projection onto the ``$x$'' variables:
\[
\pi_0(x, y_1, \dots, y_m) = x.
\]
Then the projection of the feasible of \eqref{md} is
\begin{equation}
\label{projected_feasible_set}
	\pi_0(\hat{F} \cap G) = \set{x \in \mbb{R}^{n_0} : \exists y \text{ such that } (x,y) \in \hat{F} \cap G}.
\end{equation}
Also define $\pi_{-0} : \prod_{i=0}^m \mbb{R}^{n_i} \to \prod_{i=1}^m \mbb{R}^{n_i}$
as projection onto the ``$y$'' (everything except the ``$x$'') variables:
\[
\pi_{-0}(x, y_1, \dots, y_m) = (y_1, \dots, y_m).
\]
To avoid having to deal with extended real--valued functions, we make use of the following assumption, which essentially states that each player problem \eqref{agent_i} is bounded for all $x$ feasible for Problem~\eqref{md}.

\begin{assumption}
\label{assm:bounded}
Assume that for each $i$, $g_i^*(x) > -\infty$ for all $x \in \pi_0(\hat{F} \cap G)$.
\end{assumption}

We would like to highlight some of the structure in the problem \eqref{md} when we use the aforementioned choice of $\mu : (v_1, \dots, v_m) \mapsto \smallsum_i v_i$.
The following result establishes an equivalent reformulation of \eqref{md}.

\begin{proposition}
\label{prop:md_alt}
Let Assumption~\ref{assm:bounded} hold.
Let $\mu : \mbb{R}^m \to \mbb{R}$ be defined by $\mu(v) = \smallsum_i v_i$.
Consider
\begin{align}
\label{md_alt}
\tag{$\mathcal{MD}'$}
\inf_{x,y,w}	& \smallsum_i g_i(x,y_i)  - w \\
\st 				
\notag &(x,y) \in G, \\
\notag &(x,y) \in \hat{F}, \\
\notag & w \le \smallsum_i g_i(x,y'_i), \quad \forall y' : (x,y') \in \hat{F}.
\end{align}
The $x$ and $y$ components of the solution sets of Problems~\eqref{md} and \eqref{md_alt} are equal and the infimum of \eqref{md_alt} is $\delta$.
\end{proposition}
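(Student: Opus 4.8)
The plan is to exploit two structural facts: that the constraint set $\hat F$ decouples across players once $x$ is held fixed, and that the auxiliary variable $w$ in \eqref{md_alt} can be eliminated in closed form. Throughout, the restriction to points of $\pi_0(\hat F \cap G)$ is what keeps all the extended-real expressions well defined.

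\textbf{Step 1: the decoupling identity.} First I would show that for every $x \in \pi_0(\hat F \cap G)$,
\[
	\smallsum_i g_i^*(x) = \inf_{y'}\set{ \smallsum_i g_i(x, y_i') : (x,y') \in \hat F }.
\]
This holds because $(x,y') \in \hat F$ exactly when $(x,y_i') \in F_i$ for every $i$, so the feasible set on the right is the Cartesian product of the blocks $\set{y_i' : (x,y_i') \in F_i}$ while the objective is separable across those blocks; the infimum of a separable objective over a product set is the sum of the blockwise infima. For $x \in \pi_0(\hat F \cap G)$ each block is nonempty, so $g_i^*(x) < +\infty$, and Assumption~\ref{assm:bounded} gives $g_i^*(x) > -\infty$, so every quantity here is finite and no ill-defined $\infty - \infty$ occurs. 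Hence on the common feasible set $\hat F \cap G$ the objective of \eqref{md} can be rewritten as $\smallsum_i g_i(x,y_i) - \inf_{y'}\set{\smallsum_i g_i(x,y_i') : (x,y') \in \hat F}$.

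\textbf{Step 2: eliminating $w$.} Next, fix $(x,y) \in \hat F \cap G$ and set $c(x) \defn \inf_{y'}\set{\smallsum_i g_i(x,y_i') : (x,y') \in \hat F}$; since $(x,y)$ itself certifies feasibility of this inner problem and $x \in \pi_0(\hat F \cap G)$, Step~1 shows $c(x)$ is finite. The semi-infinite family of constraints on $w$ in \eqref{md_alt} is equivalent to the single inequality $w \le c(x)$, so $w$ ranges over the closed half-line $(-\infty, c(x)]$. As $\smallsum_i g_i(x,y_i) - w$ is strictly decreasing in $w$, the unique minimizer is $w = c(x)$, giving value $\smallsum_i g_i(x,y_i) - c(x) = \mu(\allg(x,y) - \allg^*(x))$ by Step~1.

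\textbf{Step 3: conclusion.} Nesting the infimum over $w$ inside the infimum over $(x,y) \in \hat F \cap G$ then yields that the infimum in \eqref{md_alt} equals $\inf_{(x,y) \in \hat F \cap G}\mu(\allg(x,y) - \allg^*(x)) = \delta$. For the solution sets: any feasible $(x,y,w)$ of \eqref{md_alt} satisfies $\smallsum_i g_i(x,y_i) - w \ge \mu(\allg(x,y)-\allg^*(x)) \ge \delta$, with equality throughout if and only if $w = c(x)$ and $\mu(\allg(x,y)-\allg^*(x)) = \delta$; thus $(x,y,w)$ solves \eqref{md_alt} if and only if $(x,y)$ solves \eqref{md} and $w = c(x)$, so the $x$- and $y$-components of the two solution sets coincide. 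I expect the main obstacle to be Step~1: one must take care to invoke the separable-infimum-over-a-product identity only where every $g_i^*(x)$ is finite --- which is precisely why the statement is phrased for $x \in \pi_0(\hat F \cap G)$ and why Assumption~\ref{assm:bounded} is imposed --- since this is what legitimizes rewriting the extended-real-valued objective of \eqref{md}. The remaining steps are routine.
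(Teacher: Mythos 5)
Your proof is correct, and it rests on exactly the same key lemma as the paper's: because $\hat F$ is a product of the blocks $\set{y_i' : (x,y_i')\in F_i}$ once $x$ is fixed, the semi-infinite constraint collapses to $w \le \smallsum_i g_i^*(x)$, with Assumption~\ref{assm:bounded} (plus $x\in\pi_0(\hat F\cap G)$) guaranteeing finiteness. Where you diverge is only in how the two optimal values are then compared: the paper runs a two-sided $\varepsilon$-argument, constructing an $\varepsilon$-optimal feasible point of one problem from the other to get $\delta \le \tilde\delta + \varepsilon$ and $\tilde\delta \le \delta + \varepsilon$, and then reruns the argument with $\varepsilon=0$ to identify the solution sets; you instead eliminate $w$ exactly by partial minimization (the objective is decreasing in $w$ over the half-line $(-\infty, c(x)]$, so $w=c(x)$ is optimal) and nest the infima. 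Your route is a bit cleaner: it handles attainment and the value in one pass, and the infeasible case ($\hat F\cap G=\emptyset$) is covered automatically by the convention that an infimum over the empty set is $+\infty$, whereas the paper treats that case separately. The only thing worth making explicit is that the elimination of $w$ genuinely requires $c(x)$ to be finite and the sup of the feasible $w$'s to be attained, which is precisely what Step~1 delivers; you say this, so there is no gap.
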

\begin{proof}
First, note that the constraint 
$w \le \smallsum_i g_i(x,y'_i)$, $\forall y' : (x,y') \in \hat{F}$
is equivalent to 
$w \le \inf_{y'}\set{\smallsum_i g_i(x,y_i') : (x,y') \in \hat{F}}$.
By the definition of $\hat{F}$, this minimization decomposes and so the infimum equals
$\sum_i \inf_{y_i'} \set{g_i(x,y_i') : (x,y_i') \in F_i}$.
Thus the constraint is equivalent to 
$w \le \smallsum_i g_i^*(x)$.

Now, if \eqref{md} is infeasible, it is easy to see that \eqref{md_alt} must be infeasible as well.
Conversely (via the contrapositive), if \eqref{md} is feasible, then so is \eqref{md_alt}, since since the assumption $g_i^*(x) > -\infty$ means that there must exist (finite) $w$ such that $w \le \smallsum_i g_i^*(x)$.
Consequently, if one or the other is infeasible, both solution sets are empty, their infima are both $+\infty$, and the result holds trivially.

Otherwise, assume \eqref{md_alt} is feasible and let the infimum of \eqref{md_alt} be $\tilde{\delta}$.
Choose $\epsilon > 0$ and let $(x,y,w)$ be feasible in \eqref{md_alt} with 
$\smallsum_i g_i(x,y_i) - w \le \tilde{\delta} + \epsilon$.
Since $w \le \smallsum_i g_i^*(x)$, we have
$\mu(\allg(x,y) - \allg^*(x)) = \smallsum_i g_i(x,y_i) - \smallsum_i g_i^*(x) \le \tilde{\delta} + \epsilon$.
Clearly $(x,y)$ is feasible for \eqref{md}, and so this implies $\delta \le \tilde{\delta} + \epsilon$.
This argument holds for all $\epsilon$ so $\delta \le \tilde{\delta}$.

Conversely, assume  \eqref{md} is feasible, choose $\epsilon > 0$, and let $(x,y)$ be feasible in  \eqref{md} with 
$\mu(\allg(x,y) - \allg^*(x)) = \smallsum_i (g_i(x,y_i) - g_i^*(x)) \le \delta + \epsilon$.
We can take $w = \smallsum_i g_i^*(x)$, which gives $(x,y,w)$ feasible in \eqref{md_alt} with 
$\smallsum_i g_i(x,y_i) - w \le \delta + \epsilon$.
Again, letting the infimum of \eqref{md_alt} be $\tilde{\delta}$, we have 
$\tilde{\delta} \le \delta + \epsilon$.
Again, this argument holds for all $\epsilon$ so $\tilde{\delta} \le \delta$. 
Combined with the previous inequality, we have $\delta = \tilde{\delta}$.

When a solution exists for either \eqref{md} or \eqref{md_alt}, we can essentially repeat the arguments above with $\epsilon = 0$ and use $\delta = \tilde{\delta}$ to see that a solution for one is a solution for the other.
\end{proof}

It is possible that for certain values of $x$, $\set{y' : (x, y') \in \hat{F}}$ has infinite, and in particular uncountably infinite, cardinality.
Consequently, Problem \eqref{md_alt} may be categorized as a generalized semi-infinite program (GSIP);
see \cite{guerravazquezEA08} for a review.
Although progress has been made on solving GSIP, this class of problems is intrinsically less--well behaved than ``standard'' semi-infinite programs (SIP).
In the next section we will see how the problem of finding normalized Nash equilibrium yields an SIP.

\section{Normalized Nash equilibrium}
\label{sec:nne}
%
We begin to tackle the concept of a normalized Nash equilibrium.
In this section, we adapt this definition for our framework, and try to relax the typical assumptions required.
In the following subsections, we recall the established definition of a normalized Nash equilibrium and gain some familiarity with it before specializing the minimum disequilibrium concept for normalized Nash equilibrium.

\subsection{The jointly constrained case}
For the moment, consider players in the form
\[
	\inf_{y_i} \set{ f_i(y_{-i}, y_i) : (y_{-i}, y_i) \in Y }
\]
Note that the feasible set for this player is parameterized by the other players' decisions $y_{-i}$;
however, the ``host set'' $Y$ is the same for each player.
We will refer to this situation as the ``jointly constrained case''
(previous treatments typically assume $Y$ is convex, and so often call it the jointly convex case \cite{facchinei2010generalized}).

If we use the choice
$\mu : (v_1, \dots, v_m) \mapsto \smallsum_i v_i$,
the minimum disequilibrium problem \eqref{md} for this game takes the form
\[\begin{aligned}
\delta =
\inf_{y}\; & \smallsum_i f_i(y_{-i}, y_i) - f^*(y) \\
\st
& y \in Y,
\end{aligned}\]
where
\begin{equation} \label{eq:f_star}
f^*(y) \defn \sum_i \inf_{z_i} \set{ f_i(y_{-i}, z_i) : (y_{-i},z_i) \in Y }.    
\end{equation}
Once again, this problem is equivalent to a GSIP.

The term \emph{normalized} Nash equilibrium (NNE) was originally inspired by the fact that it is a particular solution where the KKT multipliers satisfy a certain normalization condition;
in this way it is a more restrictive solution, but often one that is easier to find numerically.
A definition more relevant to the present setting is that a NNE is a point $y^*$ such that
\[
y^* \in \arg \min_y \set{\sum_i f_i(y_{-i}^*, y_i) : y \in Y}
\]
(see \cite{vonheusingerEA09,flam1994noncooperative}).
Equivalently,
$y^* \in Y$ and 
$\sum_i f_i(y_{-i}^*, y_i^*) \le \sum_i f_i(y_{-i}^*, y_i)$, for all $y \in Y$.
In the following subsection, we show that for the jointly constrained case, all NNE are GNE;
in general, the converse does not hold.

Taking it as a postulate for now, an NNE is a solution of 
\[\begin{aligned}
\delta^{\dagger} \defn
\inf_{y}\; & \smallsum_i f_i(y_{-i}, y_i) - f^{\dagger}(y) \\
\st
& y \in Y,
\end{aligned}\]
with $\delta^{\dagger} = 0$, where 
\begin{equation} \label{eq:f_dagger}
f^{\dagger}(y) \defn \inf_z \set{\smallsum_i f_i(y_{-i}, z_i) : z \in Y}.
\end{equation}
Comparing this with the minimum disequilibrium problem, the only difference is in the optimal value function that appears in the objective.
Noting that the feasible sets in the optimization problems defining $f^*$ are independent, we have
\begin{equation} \label{eq:f_star_alt}
f^*(y) = \inf_{z} \set{ \smallsum_i f_i(y_{-i}, z_i) : (y_{-i},z_i) \in Y, \forall i }.
\end{equation}
Given the similarity between the optimization problems in \eqref{eq:f_dagger} and \eqref{eq:f_star_alt}, we might suspect that, for instance, 
$f^{\dagger}(y) \le f^*(y)$ for all $y$;
in this case we would have $\delta^{\dagger} \ge \delta \ge 0$,
which would imply that an NNE, with $\delta^{\dagger} = 0$, would also be a GNE.

While this intuition is correct, showing it rigorously is complicated by the fact that there is no immediate relationship between $f^*$ and $f^{\dagger}$:
that is, the optimization problems in \eqref{eq:f_dagger} and \eqref{eq:f_star_alt} are not a restriction or relaxation of the other.
See \Cref{fig:joint};
the feasible set of \eqref{eq:f_star_alt} for a given value of $y$ is
$\prod_i \set{z_i: (y_{-i},z_i) \in Y}$,
which is not a subset of $Y$ even in simple convex cases.
However, given a value of $y$, each feasible set
$\set{z_i: (y_{-i},z_i) \in Y}$
(in \eqref{eq:f_star}, the original definition of $f^*$),
is in fact a ``subset'' of $Y$, or more accurately, a subset of the projection of $Y$ onto the appropriate subspace.
This indicates the structure of which we should try to take advantage.

\begin{figure}
\centering
\begin{tikzpicture}
\draw[red] (0,0) circle [radius=1];
\node[above,color=red] at (0,1) {$Y$};
\draw[very thick] (-1,0) -- (1,0);
\draw[very thick] (1.414/2,1.414/2) -- (1.414/2,-1.414/2);
\draw (-1,-1.414/2) rectangle (1,1.414/2) 
	node[right]{$Y_1(y_2) \times Y_2(y_1)$};
\draw[->] (-1.2,-1.2) -- (-1.2,1);
\draw[->] (-1.2,-1.2) -- (1.2,-1.2);
\draw[dashed] (1.414/2,-1.2) node[below]{$y_1$} -- (1.414/2, -1.414/2);
\draw[dashed] (-1.2,0) node[left]{$y_2$} -- (-1,0);
\end{tikzpicture}
\caption{``Jointly constrained case''.
Here, 
$Y_1(y_2) = \set{z_1 : (z_1,y_2) \in Y}$ and similarly
$Y_2(y_1) = \set{z_2 : (y_1,z_2) \in Y}$.
$Y$ is the lower-level feasible set in the definition of NNE;
$Y_1(y_2) \times Y_2(y_1)$ is the lower-level feasible set in the definition of GNE.
Note that there is not necessarily a subset/superset relationship between the lower-level problem feasible sets.}
\label{fig:joint}
\end{figure}
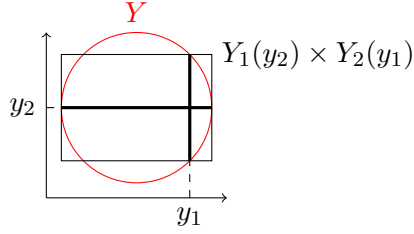

\subsection{Minimum normalized disequilibrium}
Here, we will translate the preceding discussion into our framework.
We make the following definition.

\begin{definition}
\label{defn:nne}
A point $(x^*,y^*)$ is a normalized Nash equilibrium (NNE) of the collection of \eqref{agent_i} with respect to $G$ if 
\[\begin{aligned}
	&(x^*,y^*) \in \hat{F} \cap G \quad \text{and}\\
	&y^* \in \arg\min_{y'}\set{ \smallsum_i g_i(x^*, y'_i) : y' \in \pi_{-0}(\hat{F} \cap G) }.
\end{aligned}\]
Equivalently, $(x^*,y^*) \in \hat{F} \cap G$ and
\[
	\smallsum_i g_i(x^*, y_i^*) \le \smallsum_i g_i(x^*,y'_i), \quad \forall (x',y') \in \hat{F} \cap G.
\]
\end{definition}

To try to convince ourselves that this is consistent with the definition in the previous subsection, consider a two player game in the jointly constrained case.
We define $G$ by 
$(x_1, x_2, y_1, y_2) \in G \iff x_1 = y_1, x_2 = y_2$.
We define $F_1$ and $F_2$ by
$(x,y_1) \in F_1 \iff (y_1,x_2) \in Y$ and 
$(x,y_2) \in F_2 \iff (x_1,y_2) \in Y$.
Then,
$(x,y) \in \hat{F} \cap G$ $\iff$ $(y_1,y_2) \in Y$, $(x_1,x_2) = (y_1,y_2)$.
The $x$ variables are extraneous, and the conditions in Definition~\ref{defn:nne} simplify to
$y^* \in \arg\min_y \set{ \smallsum_i g_i(y^*,y_i) : y \in Y }$.
Defining $f_i$ appropriately gives us the definition in the previous subsection.

Before relating NNE and GNE, we would like to characterize an NNE as a solution of a mathematical program satisfying certain conditions.
First, define an optimal value function relevant to the definition of NNE:
\begin{equation}
\label{eq:optimal_value_normalized}
	g^N(x) \defn \inf_{y'} \set{ \smallsum_i g_i(x, y'_i) : y' \in \pi_{-0}(\hat{F} \cap G) }.
\end{equation}
Note that an equivalent definition of $g^N$ is one with ``dummy'' variables $x'$ added to implicitly represent the projection of $\hat{F} \cap G$:
\[
	g^N(x) = \inf_{x',y'} \set{ \smallsum_i g_i(x, y'_i) : (x',y') \in \hat{F} \cap G }.
\]
Again, to avoid having to deal with extended real--valued functions, we make an assumption for $g^N$ analogous to Assumption~\ref{assm:bounded}.

\begin{assumption}
\label{assm:bounded_normal}
Assume that for all $x \in \pi_0(\hat{F} \cap G)$, $g^N(x) > -\infty$.
\end{assumption}

In the following result, we establish that an NNE is a solution of an SIP satisfying certain conditions.
Again, contrast this with the reformulation of the minimum disequilibrium problem, \eqref{md_alt}, which is a GSIP.
Since SIP tend to be better-behaved and easier to solve, NNE achieves its goal of providing a computationally easier to find solution.

\begin{proposition}
\label{prop:md_normal}
Let Assumption~\ref{assm:bounded_normal} hold.
Consider
\begin{equation} \label{md_normal}
\tag{$\mathcal{MND}$}
\begin{aligned}
\delta^{N} \defn 
\inf_{x,y,w} & \sum_i g_i(x,y_i) - w \\
\st
& (x,y) \in G, \\
& (x,y) \in \hat{F}, \\
& w \le \smallsum_i g_i(x, y'_i), \quad \forall y' \in \pi_{-0}(\hat{F} \cap G).
\end{aligned}
\end{equation}
Then $(x^*,y^*,w^*)$ is a solution of \eqref{md_normal} with $\delta^N = 0$ if and only if $(x^*,y^*)$ is an NNE.
\end{proposition}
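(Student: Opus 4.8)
The plan is to mirror the proof of Proposition~\ref{prop:md_alt}, with the normalized optimal value function $g^N$ from~\eqref{eq:optimal_value_normalized} playing the role that $\smallsum_i g_i^*$ played there. The one structural difference is that the infimum defining $g^N$ runs over $\pi_{-0}(\hat{F}\cap G)$ and therefore does \emph{not} decompose across players; but, in contrast to Proposition~\ref{prop:md_alt}, no decomposition is needed here, so this causes no difficulty.

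First I would rewrite the semi-infinite constraint: by definition of the infimum, ``$w \le \smallsum_i g_i(x,y_i')$ for all $y' \in \pi_{-0}(\hat{F}\cap G)$'' is equivalent to the single inequality $w \le g^N(x)$. Next I would verify that $g^N(x)$ is finite at every relevant point. If $(x,y,w)$ is feasible in~\eqref{md_normal}, then $(x,y)\in\hat{F}\cap G$, so $y\in\pi_{-0}(\hat{F}\cap G)$ is admissible in the infimum defining $g^N(x)$; since each $g_i$ is real-valued this gives $g^N(x)\le\smallsum_i g_i(x,y_i)<+\infty$, and Assumption~\ref{assm:bounded_normal} gives $g^N(x)>-\infty$. (The degenerate case is also handled: if $\hat{F}\cap G=\emptyset$ there are no NNE and~\eqref{md_normal} is infeasible, so the asserted equivalence holds vacuously.)

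With these two observations the rest is immediate. For any $(x,y,w)$ feasible in~\eqref{md_normal}, $w\le g^N(x)\le\smallsum_i g_i(x,y_i)$, so the objective $\smallsum_i g_i(x,y_i)-w\ge 0$; hence $\delta^N\ge 0$. For the ``if'' direction, suppose $(x^*,y^*)$ is an NNE; then $(x^*,y^*)\in\hat{F}\cap G$ and, using either form of Definition~\ref{defn:nne}, $\smallsum_i g_i(x^*,y_i^*)=g^N(x^*)$, a finite number by the previous paragraph. Setting $w^*\defn g^N(x^*)$ makes $(x^*,y^*,w^*)$ feasible in~\eqref{md_normal} with objective value $0$, which is optimal by the bound just shown, so $\delta^N=0$. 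For the ``only if'' direction, if $(x^*,y^*,w^*)$ solves~\eqref{md_normal} with $\delta^N=0$, then $\smallsum_i g_i(x^*,y_i^*)=w^*\le g^N(x^*)$, while $(x^*,y^*)\in\hat{F}\cap G$ forces $\smallsum_i g_i(x^*,y_i^*)\ge g^N(x^*)$; equality says $y^*$ attains the minimum in~\eqref{eq:optimal_value_normalized}, which together with $(x^*,y^*)\in\hat{F}\cap G$ is exactly Definition~\ref{defn:nne}.

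I do not anticipate a serious obstacle. The only points needing care are bookkeeping of finiteness — that $g^N(x)$ is never $\pm\infty$ on the relevant set, which is precisely what Assumption~\ref{assm:bounded_normal} plus real-valuedness of the $g_i$ guarantees — and the slightly asymmetric logical form of the biconditional: the forward implication reads off the NNE directly, whereas the backward implication must exhibit the witness $w^*=g^N(x^*)$.
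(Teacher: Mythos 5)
Your proposal is correct and follows essentially the same route as the paper's proof: establish $\delta^N \ge 0$ via $w \le g^N(x) \le \smallsum_i g_i(x,y_i)$ for any feasible point, then verify both directions of the biconditional by exhibiting the witness $w^*$ (your choice $w^* = g^N(x^*)$ coincides with the paper's $w^* = \smallsum_i g_i(x^*,y_i^*)$ at an NNE). The additional finiteness bookkeeping and the vacuous infeasible case you include are fine but not needed beyond what the paper already does.
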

\begin{proof}
First, note that $\delta^N \ge 0$:
Choose any feasible point $(x,y,w)$;
then we must have $(x,y) \in \hat{F} \cap G$ and $w \le g^N(x)$.
Since $(x,y) \in \hat{F} \cap G$, then $y \in \pi_{-0}(\hat{F} \cap G)$ and so by definition of $g^N(x)$, we have
$w \le g^N(x) \le \sum_i g_i(x,y_i)$.
Thus, the objective value for any feasible point $(x,y,w)$ is nonnegative.
It follows that $\delta^N$ is nonnegative.

Let $(x^*,y^*)$ be an NNE.
Then $(x^*,y^*) \in \hat{F}\cap G$ and $\smallsum_i g_i(x^*, y_i^*) \le \smallsum_i g_i(x^*,y'_i)$ for all $y' \in \pi_{-0}(\hat{F} \cap G)$.
If we let $w^* = \smallsum_i g_i(x^*, y_i^*)$, then $w^* \le \smallsum_i g_i(x^*,y'_i)$ for all $y' \in \pi_{-0}(\hat{F} \cap G)$ and so $(x^*, y^*, w^*)$ is feasible in \eqref{md_normal} with objective value zero.
It follows that $\delta^N \le 0$, but since $\delta^N \ge 0$ established previously, we have $\delta^N = 0$ and $(x^*, y^*, w^*)$ is optimal for \eqref{md_normal}.

Conversely, let $(x^*, y^*, w^*)$ be a solution of \eqref{md_normal} with $\delta^N = 0$.
Then $(x^*, y^*) \in \hat{F} \cap G$ and $w^* \le \smallsum_i g_i(x^*, y'_i)$ for all $y' \in \pi_{-0}(\hat{F} \cap G)$.
But since $\smallsum_i g_i(x^*,y_i^*) - w^* = \delta^N = 0$, we have 
$\smallsum_i g_i(x^*,y_i^*) \le \smallsum_i g_i(x^*, y'_i)$ for all $y' \in \pi_{-0}(\hat{F} \cap G)$, and it follows that $(x^*, y^*)$ is an NNE.
\end{proof}


We refer to a solution of \eqref{md_normal} as a \emph{minimum normalized disequilibrium} solution.
What is the relationship between minimum disequilibrium and minimum normalized disequilibrium?
Comparing \eqref{md_alt} and \eqref{md_normal}, the difference is in the ``infinite'' constraint;
for \eqref{md_alt} it is equivalent to $w \le \sum_i g_i^*(x)$ while
for \eqref{md_normal} it is equivalent to $w \le g^N(x)$. 
Thus, we might try to compare $\sum_i g_i^*(x)$ and $g^N(x)$.
However, in analogy with the observation in \Cref{fig:joint}, to get the definition of $g^N$, it is as if we have both relaxed the problem by allowing optimization over the dummy variables $x'$, but additionally restricted it to the set $G$.
Consequently, we need extra conditions on the sets $F_i$ and $G$ in order to relate the two problems.
In the following, define, for any index $j$,
$\pi_j : \prod_{i=0}^m \mbb{R}^{n_i} \to \mbb{R}^{n_j}$
as the projection onto the ``$y_j$'' variables:
\[
\pi_j(x, y_1, \dots, y_m) = y_j.
\]
The conditions in the next assumption are inspired by the observations we made in the jointly constrained case.

\begin{assumption}
\label{assm:supset_projection}
Assume that for any $(x,y) \in \hat{F} \cap G$, and any index $j$, we have
\[
\set{ y_j' : (x,y_j') \in F_j } \subset
\pi_j \group{ \set{ (x', y_1', \dots, y_m') \in \hat{F} \cap G : y_i' = y_i, \forall i \neq j } }.
\]
\end{assumption}

Under this assumption, we can establish that \eqref{md_normal} is a restriction of \eqref{md}.
\begin{proposition}
\label{prop:normal_restriction}
Let Assumptions~\ref{assm:bounded_normal} and \ref{assm:supset_projection} hold.
Assume that $\mu(v) = \max\set{v_1, \dots, v_m}$ in the minimum disequilibrium problem \eqref{md}.
Then, for any point $(x,y,w)$ feasible in \eqref{md_normal}, $(x,y)$ is feasible in \eqref{md} with a smaller objective value:
$\mu(g(x,y) - g^*(x)) \le \sum_i g_i(x,y_i) - w$.
Further, $\delta \le \delta^N$.
\end{proposition}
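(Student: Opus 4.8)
The plan is to fix an arbitrary feasible point $(x,y,w)$ of \eqref{md_normal} and argue directly. Feasibility of $(x,y)$ in \eqref{md} is immediate, since the first two constraints of \eqref{md_normal} give exactly $(x,y)\in\hat{F}\cap G$. The substance is the objective inequality, and because $\mu=\max$, it suffices to bound each disequilibrium term $g_j(x,y_j)-g_j^*(x)$ separately by $\sum_i g_i(x,y_i)-w$. The key device is to test the ``infinite'' constraint of \eqref{md_normal} against cleverly chosen points of $\pi_{-0}(\hat{F}\cap G)$ supplied by Assumption~\ref{assm:supset_projection}.

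Step 1 (a coordinatewise bound on $g_j^*$). Fix an index $j$ and any $y_j'$ with $(x,y_j')\in F_j$; such a point exists, e.g.\ $y_j'=y_j$, since $(x,y)\in\hat{F}$. Apply Assumption~\ref{assm:supset_projection} to the point $(x,y)\in\hat{F}\cap G$ and index $j$: since $y_j'$ lies in $\set{y_j'':(x,y_j'')\in F_j}$, there is a point $(x',\bar{y})\in\hat{F}\cap G$ with $\bar{y}_i=y_i$ for all $i\neq j$ and $\bar{y}_j=y_j'$. In particular $\bar{y}\in\pi_{-0}(\hat{F}\cap G)$, so the infinite constraint of \eqref{md_normal} (note the first argument stays $x$, not the dummy $x'$) yields $w\le\smallsum_i g_i(x,\bar{y}_i)=\smallsum_{i\neq j}g_i(x,y_i)+g_j(x,y_j')$. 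Since $y_j'$ was arbitrary among the feasible choices for player $j$ at $x$, taking the infimum gives $w - \smallsum_{i\neq j}g_i(x,y_i)\le g_j^*(x)$. The left-hand side is a finite real number (as $w$ is finite and each $g_i$ is real-valued), so $g_j^*(x)>-\infty$; and $g_j^*(x)<+\infty$ because $(x,y_j)\in F_j$, so $g_j(x,y_j)-g_j^*(x)$ is a well-defined real.

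Step 2 (assembling the result). Rearranging the bound from Step~1 gives $g_j(x,y_j)-g_j^*(x)\le\smallsum_i g_i(x,y_i)-w$ for every $j$, hence $\mu(g(x,y)-g^*(x))=\max_j\bigl(g_j(x,y_j)-g_j^*(x)\bigr)\le\smallsum_i g_i(x,y_i)-w$, which together with feasibility of $(x,y)$ in \eqref{md} is the per-point claim. Taking the infimum over all $(x,y,w)$ feasible in \eqref{md_normal} then gives $\delta\le\delta^N$ (trivially so if \eqref{md_normal} is infeasible, since then $\delta^N=+\infty$). I expect the only delicate part to be the quantifier bookkeeping around Assumption~\ref{assm:supset_projection}: one must feed it the right point $(x,y)$ and index $j$, and verify that the witness $\bar{y}$ it produces is genuinely an element of $\pi_{-0}(\hat{F}\cap G)$ so that the infinite constraint applies with the original $x$. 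It is also worth remarking, since it clarifies why $\mu=\max$ is required here, that the bound is obtained one coordinate at a time, and that Assumption~\ref{assm:bounded} is not needed because finiteness of $g_j^*(x)$ falls out of Step~1.
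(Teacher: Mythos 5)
Your proof is correct and follows essentially the same route as the paper's: both use Assumption~\ref{assm:supset_projection} to turn points of $\set{y_j' : (x,y_j') \in F_j}$ into admissible test points of the infinite constraint that differ from $y$ only in coordinate $j$, and then cancel the $i \neq j$ terms to get a per-player bound on $g_j(x,y_j) - g_j^*(x)$. The only cosmetic difference is that you take the infimum over $y_j'$ before rearranging (which also yields your correct side observation that $g_j^*(x) > -\infty$ for free), whereas the paper rearranges first and then passes to $g_j^*$.
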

\begin{proof}
Choose $(x,y,w)$ feasible in \eqref{md_normal}.
Let the objective value be
$\epsilon \defn \sum_i g_i(x,y_i) - w$.
As noted in the proof of Proposition~\ref{prop:md_normal}, this objective value $\epsilon$ must be nonnegative.
By feasibility, we have $(x,y) \in \hat{F} \cap G$ and
\[
\smallsum_i g_i(x, y_i) \le \smallsum_i g_i(x, y_i') + \epsilon,
	\quad \forall (x',y') \in \hat{F} \cap G.
\]
Pick any $j$.
Then, this inequality also holds for all $(x',y')$ in the subset
\[
H_j \defn \set{ (x', y_1',\dots, y_m') \in \hat{F} \cap G : y_i' = y_i, \forall i \neq j }.
\]
For $(x', y') \in H_j$, $y_i' = y_i$ for each $i \neq j$, and so $g_i(x, y_i') = g_i(x, y_i)$.
Otherwise put,
\[
\smallsum_i g_i(x, y_i) \le g_j(x, y_j') + \smallsum_{i \neq j} g_i(x, y_i) + \epsilon, 
	\quad \forall (x',y') \in H_j.
\]
We can cancel terms from either side of the inequality which gives
\[
g_j(x, y_j) \le g_j(x, y_j') + \epsilon,
	\quad \forall (x',y') \in H_j.
\]
Equivalently, this inequality holds for all $y_j' \in \pi_j(H_j)$.
However, by assumption $\pi_j(H_j) \supset \set{ y_j' : (x,y_j') \in F_j }$.
Thus, the inequality still holds on this subset:
\[
g_j(x, y_j) \le g_j(x, y_j') + \epsilon,
	\quad \forall y_j' : (x,y_j') \in F_j,
\]
which implies
$g_j(x, y_j) \le g_j^*(x) + \epsilon$.
We can repeat this argument for each $j$;
thus the point $(x,y)$ is feasible in \eqref{md} with objective value 
$\max_j\set{g_j(x, y_j) - g_j^*(x)} \le \epsilon$.
This shows that \eqref{md_normal} is a restriction of \eqref{md};
it follows that $\delta \le \delta^N$.
\end{proof}

With these results, we can now relate NNE and GNE.

\begin{theorem}
\label{thm:nne_is_gne}
Let Assumptions~\ref{assm:bounded_normal} and \ref{assm:supset_projection} hold.
Then any NNE is a GNE.
\end{theorem}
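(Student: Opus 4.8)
The plan is to chain together the characterizations and comparisons already established. Suppose $(x^*, y^*)$ is an NNE. By Proposition~\ref{prop:md_normal}, there is a $w^*$ such that $(x^*, y^*, w^*)$ is a solution of \eqref{md_normal} with $\delta^N = 0$. Then I would invoke Proposition~\ref{prop:normal_restriction}: under Assumptions~\ref{assm:bounded_normal} and \ref{assm:supset_projection}, the point $(x^*, y^*)$ is feasible in \eqref{md} (with $\mu = \max$) with objective value at most $\sum_i g_i(x^*, y_i^*) - w^* = \delta^N = 0$. Since $\delta \ge 0$ always (by the hypotheses on $\mu$, which $\max$ satisfies), this forces the objective value of $(x^*, y^*)$ in \eqref{md} to be exactly $0$, so $(x^*, y^*)$ is an optimal solution of \eqref{md} with $\delta = 0$.

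The final step is to apply Proposition~\ref{prop:equilibrium}: a point solving \eqref{md} with $\delta = 0$ is a GNE. One caveat is that Proposition~\ref{prop:equilibrium} as stated takes a solution of \eqref{md} as input; here I would note that $(x^*, y^*)$ attains objective value $0$ and $\delta = 0$, hence it is a solution (a minimizer). Also, Proposition~\ref{prop:equilibrium} requires $\mu$ to satisfy the two listed conditions; I would briefly remark that $\mu(v) = \max_i v_i$ indeed satisfies them: if $v \in \bar{\mbb{R}}_+^m$ then $\max_i v_i \ge 0$, and for $v \in \bar{\mbb{R}}_+^m$, $\max_i v_i = 0$ iff every $v_i = 0$.

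I do not anticipate a real obstacle, since all the heavy lifting is done by the cited propositions; the only thing to be careful about is the bookkeeping around which value of $\mu$ is in force (Proposition~\ref{prop:normal_restriction} fixes $\mu = \max$, so Proposition~\ref{prop:equilibrium} must be applied with that same $\mu$) and the fact that Definition~\ref{defn:nne} quietly includes $(x^*, y^*) \in \hat{F} \cap G$, which is needed for feasibility in \eqref{md} but is already guaranteed. So the write-up is essentially: instantiate Proposition~\ref{prop:md_normal}, push through Proposition~\ref{prop:normal_restriction}, conclude $\delta = \delta^N = 0$ with $(x^*,y^*)$ attaining it, and finish with Proposition~\ref{prop:equilibrium}.

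\begin{proof}
Let $(x^*, y^*)$ be an NNE. By Proposition~\ref{prop:md_normal}, there exists $w^*$ such that $(x^*, y^*, w^*)$ is a solution of \eqref{md_normal} with $\delta^N = 0$; in particular $\sum_i g_i(x^*, y_i^*) - w^* = 0$. Take $\mu(v) = \max\set{v_1, \dots, v_m}$ in \eqref{md}. Note $\mu$ satisfies the conditions of Proposition~\ref{prop:equilibrium}: if $v \in \bar{\mbb{R}}_+^m$ then $\max_i v_i \ge 0$, and for such $v$ we have $\max_i v_i = 0$ if and only if $v_i = 0$ for all $i$. By Proposition~\ref{prop:normal_restriction}, which applies under Assumptions~\ref{assm:bounded_normal} and \ref{assm:supset_projection}, the point $(x^*, y^*)$ is feasible in \eqref{md} with
\[
\mu(g(x^*, y^*) - g^*(x^*)) \le \smallsum_i g_i(x^*, y_i^*) - w^* = \delta^N = 0.
\]
Since the conditions on $\mu$ imply $\delta \ge 0$, and $\delta \le \delta^N = 0$ by Proposition~\ref{prop:normal_restriction}, we conclude $\delta = 0$, and $(x^*, y^*)$ attains this value and is therefore a solution of \eqref{md} with $\delta = 0$. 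By Proposition~\ref{prop:equilibrium}, $(x^*, y^*)$ is a GNE.
\end{proof}
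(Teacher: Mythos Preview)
Your proof is correct and follows essentially the same route as the paper's own proof: invoke Proposition~\ref{prop:md_normal} to get $\delta^N = 0$, apply Proposition~\ref{prop:normal_restriction} with $\mu = \max$ to conclude $(x^*,y^*)$ is feasible in \eqref{md} with objective value zero (hence $\delta = 0$), and finish with Proposition~\ref{prop:equilibrium}. Your write-up simply makes explicit a couple of points the paper leaves implicit, namely that $\mu = \max$ satisfies the hypotheses of Proposition~\ref{prop:equilibrium} and that $\delta \ge 0$ forces $(x^*,y^*)$ to actually attain the minimum.
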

\begin{proof}
Let $(x^*,y^*)$ be an NNE.
Then by Proposition~\ref{prop:md_normal}, there exists $w^*$ such that $(x^*,y^*,w^*)$ is a solution of \eqref{md_normal} with $\delta^N = 0$.
By Proposition~\ref{prop:normal_restriction}, this implies $(x^*,y^*)$ is feasible in \eqref{md} with $\delta = 0$.
Finally, since the choice of $\mu(v) = \max\set{v_1, \dots, v_m}$ satisfies the conditions required of $\mu$ in Proposition~\ref{prop:equilibrium}, this means $(x^*,y^*)$ is a GNE.
\end{proof}
%

Assumption~\ref{assm:bounded_normal} does not seem to be strictly essential to show that NNE are GNE;
however, it is a mild enough assumption and we find Theorem~\ref{thm:nne_is_gne} satisfactory.
Meanwhile, Assumption~\ref{assm:supset_projection} does not explicitly impose any specific structure on $F_i$ or $G$, although practically we might only expect the condition to hold when these sets take the form implied when converting a game from the jointly constrained case.
In this situation, $x = (x_1, \dots, x_m)$, and these variables are merely a copy of the $y$ variables, and $G$ is used to enforce this.
We establish this special case in the following result.

\begin{proposition}
\label{prop:jcc_conversion}
Assume that $n_0 = \sum_{i=1}^m n_i$ and
\[
	G = \set{ (x_1, \dots, x_m, y_1, \dots, y_m) : x_i = y_i, \forall i }.
\]
Further, assume that there exists 
$Y \subset \prod_{i} \mbb{R}^{n_i}$ such that for each $i$,
\[
	F_i = \set{ (x_1, \dots, x_m, y_i) : (x_1, \dots, x_{i-1}, y_i, x_{i+1}, \dots x_m) \in Y}.
\]
Then, Assumption~\ref{assm:supset_projection} holds.
\end{proposition}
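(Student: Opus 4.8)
The plan is to unfold every definition and observe that, under the stated hypotheses, $\hat{F}\cap G$ is just the ``diagonal'' copy of $Y$, after which both sides of the inclusion in Assumption~\ref{assm:supset_projection} reduce verbatim to the same ``section'' of $Y$, so the inclusion holds (in fact as an equality).

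First I would compute $\hat{F}\cap G$ explicitly. Fix $(x,y) = (x_1,\dots,x_m,y_1,\dots,y_m)$. Membership in $G$ says $y_i = x_i$ for all $i$. Given this, $(x,y_i)\in F_i$ unfolds to $(x_1,\dots,x_{i-1},y_i,x_{i+1},\dots,x_m)\in Y$, which, substituting $y_i = x_i$, is just $(x_1,\dots,x_m)\in Y$ --- the same condition for every $i$. Hence
\[
\hat{F}\cap G = \set{ (x_1,\dots,x_m,x_1,\dots,x_m) : (x_1,\dots,x_m)\in Y }.
\]
In particular, for any point of $\hat{F}\cap G$ the $y$-block equals the $x$-block, and the $x$-block lies in $Y$.

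Next, fix $(x,y)\in\hat{F}\cap G$ (so $y=x$ and $x\in Y$) and an index $j$. By the assumed form of $F_j$, the left-hand side of the inclusion is
\[
\set{ y_j' : (x,y_j')\in F_j } = \set{ y_j' : (x_1,\dots,x_{j-1},y_j',x_{j+1},\dots,x_m)\in Y }.
\]
Now consider $H_j \defn \set{ (x',y_1',\dots,y_m')\in\hat{F}\cap G : y_i' = y_i,\ \forall i\neq j }$. By the computation above, any element of $H_j$ has $y'=x'$ and $x'\in Y$; combining $y_i' = y_i$ for $i\neq j$ with $y'=x'$ and $y=x$ gives $x_i' = y_i' = y_i = x_i$ for all $i\neq j$. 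Thus the elements of $H_j$ are exactly the points $(x',x')$ with $x' = (x_1,\dots,x_{j-1},x_j',x_{j+1},\dots,x_m)\in Y$, and since $\pi_j(x',x')$ is the $y_j$-component, which equals $x_j'$, we get
\[
\pi_j(H_j) = \set{ x_j' : (x_1,\dots,x_{j-1},x_j',x_{j+1},\dots,x_m)\in Y }.
\]

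Comparing the last two displays (renaming the bound variable $x_j'$ as $y_j'$) shows $\set{ y_j' : (x,y_j')\in F_j } = \pi_j(H_j)$, which is in particular the inclusion demanded by Assumption~\ref{assm:supset_projection}, and since $j$ and $(x,y)$ were arbitrary we are done. I do not expect any genuine obstacle here: the only care needed is the bookkeeping between the two ``copies'' of the block variables (the $x$-block versus the $y$-block) and the index juggling in the definition of $F_j$; once $\hat{F}\cap G$ is correctly identified as the diagonal of $Y$, both sides of the inclusion collapse to the same section of $Y$.
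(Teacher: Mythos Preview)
Your proof is correct and follows essentially the same idea as the paper's: both arguments boil down to the observation that, once $(x,y)\in\hat F\cap G$ forces $x=y\in Y$, the set $H_j$ consists of diagonal points $(x',x')$ with $x'$ agreeing with $x$ off coordinate $j$ and lying in $Y$, so $\pi_j(H_j)$ is exactly the $j$-th section of $Y$ through $x_{-j}$, which is also the left-hand side. Your presentation is slightly tidier in that you first identify $\hat F\cap G$ explicitly as the diagonal of $Y$ and then read off both sides as the same set (obtaining equality, not just the required inclusion), whereas the paper does a direct element-chase, picking $y_j'$ on the left and exhibiting a preimage in $H_j$; but these are the same argument differently packaged.
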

\begin{proof}
Choose any $(x,y) \in \hat{F} \cap G$ and any index $j$.
Define
\[
	H_j \defn \set{ (x', y_1', \dots, y_m') \in \hat{F} \cap G : y_i' = y_i, \forall i \neq j }.
\]
Choose $y_j'$ such that $(x,y_j') \in F_j$.
We need to show that there exist $x'$ and $y_i'$, $i \neq j$, such that $(x',y') \in H_j$.
To this end, let $y_i' = y_i$, for each $i \neq j$, and $x' = y'$.
We establish that this point satisfies $(x',y') \in H_j$.

Clearly the conditions $y_i' = y_i$, for each $i \neq j$ are satisfied, and since $x' = y'$, we have $(x',y') \in G$.
It remains to show that $(x',y') \in \hat{F}$.
To show this, note that $y' \in Y$ is equivalent to $(y',y') \in \hat{F}$ 
($y' \in Y \iff (y',y_i') \in F_i, \forall i \iff (y',y') \in \hat{F}$).
Since we chose $y_j'$ such that $(x,y_j') \in F_j$, then forgiving a re-ordering of variables, this means $(x_{-j}, y_j') \in Y$.
But we also chose $(x,y) \in G$, so $x_{-j} = y_{-j} = y_{-j}'$, and so we have $y' \in Y$.
It follows that $(x',y') = (y',y') \in \hat{F}$.
Thus, we have $(x',y') \in H_j$, which implies $y_j' \in \pi_j(H_j)$.
Since $y_j'$ was arbitrary this establishes $\set{ y_j' : (x,y_j') \in F_j } \subset \pi_j(H_j)$.
\end{proof}

Proposition~\ref{prop:jcc_conversion} and Theorem~\ref{thm:nne_is_gne} establish that NNE are GNE in the jointly constrained case, as promised.
More generally, the minimum normalized disequilibrium problem promises to be computationally easier than the minimum disequilibrium problem, provides an approximation to a minimum disequilibrium solution, and even provides an upper bound on minimum disequilibrium
(albeit with slightly different measures of disequilibrium).



\subsection{Solution method for normalized Nash equilibrium}
\label{sec:solution_methods}
\Cref{alg:CG} solves Problem~\eqref{md_normal} to $\varepsilon$-optimality under mild conditions
(e.g.~compactness of $G$, $F_i$, and continuity of $g_i$).
This algorithm relies on the iterative solution of the optimization problem \eqref{eq:optimal_value_normalized} defining $g^N(x)$,
\[
	g^N(x) = \inf_{y'} \set{ \smallsum_i g_i(x,y'_i) : y' \in \pi_{-0}(\hat{F} \cap G) },
\]
and
\begin{align} \label{mnd_lb}
\delta^{L} \defn 
\inf_{x,y,w} & \sum_i g_i(x,y_i) - w \\
\st
\notag & (x,y) \in G, \\
\notag & (x,y) \in \hat{F}, \\
\notag & w \le \smallsum_i g_i(x, y'_i), \quad \forall y' \in F^L
\end{align}
for some finite set $F^L$ that is a subset of $\pi_{-0}(\hat{F} \cap G)$.
A key observation is that since $F^L \subset \pi_{-0}(\hat{F} \cap G)$,
$\inf \set{\smallsum_i g_i(x, y'_i) : y' \in F^L} \ge \inf \set{\smallsum_i g_i(x, y'_i) : y' \in \pi_{-0}(\hat{F} \cap G)}$.
Thus, \eqref{mnd_lb} is a relaxation of \eqref{md_normal}, and $\delta^L$ is a lower bound on $\delta^N$.
Meanwhile an upper bound on $\delta^N$ is given by
$\sum_i g_i(x,y_i) - g^N(x)$
for some $(x,y) \in \hat{F} \cap G$.
Again, these bounds converge under mild assumptions (to be more specific, compactness of $\hat{F} \cap G$ and continuity of each $g_i$);
for details see the proofs in \cite{harwood2022equilibrium,harwood2024equilibrium}, which also allow for inexact solution of the subproblems.
See also the general algorithm for solving SIP from \cite{blankenshipEA76}.

\begin{algorithm}
\caption{Solution method for Problem~\eqref{md_normal}}
\label{alg:CG}
\begin{algorithmic}[1]
\REQUIRE
$\varepsilon>0$,
$F^L \subset \hat{F} \cap G$, 
$F^L \neq \emptyset$
\STATE{ $\delta^U = +\infty$ }
\LOOP
	\STATE
    \label{step:lb}
	Solve Problem~\eqref{mnd_lb} to obtain 
		optimal solution $(x,y,w)$ and
		value $\delta^L$.
	\STATE
    \label{step:llp}
	For this value of $x$, 
		solve lower-level problem \eqref{eq:optimal_value_normalized} to obtain 
		optimal solution $y'$ and value $g^N(x)$.
	
	\IF{$w \le g^N(x)$}
    \label{step:equilibrium_found}
		\STATE $(x^*,y^*) \gets (x,y)$
		\RETURN $(x^*,y^*)$
	\ELSE
		\STATE
		$F^L \gets F^L \cup \{y'\}$
	\ENDIF
	\IF{$\sum_i g_i(x,y_i) - g^N(x) < \delta^U$}
		\STATE
		\label{step:upperbound}
			$\delta^U \gets \sum_i g_i(x,y_i) - g^N(x)$
		\STATE
			$(x^*,y^*) \gets (x,y)$
	\ENDIF
	\IF{$\delta^U - \max\set{\delta^L,0} < \varepsilon$}
	\label{step:term}
		\RETURN $(x^*,y^*)$
	\ENDIF
\ENDLOOP
\end{algorithmic}
\end{algorithm}

\section{Numerical examples}
\label{sec:examples}
In this section we present two simple examples to better demonstrate the numerical method.

\subsection{A two-player game with nonconvex players}
\label{sec:another_example}

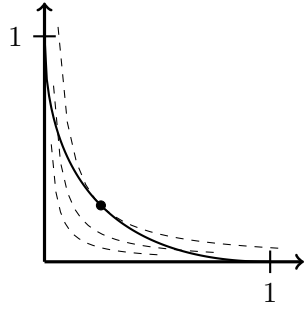
\begin{figure}[t]
    \centering
    \begin{tikzpicture}[xscale=3, yscale=3]
    \draw[->,draw=black,very thick] (0,0) -- (0,1.15);
    \draw[->,draw=black,very thick] (0,0) -- (1.15,0);
    \draw[domain=0:1,samples=100,thick] plot (\x, {(1 - sqrt(\x))^2});
    \draw[thick] (1,0.05) -- (1,-0.05) node[below] {$1$};
    \draw[thick] (0.05,1) -- (-0.05,1) node[left] {$1$};
    \draw[domain=0.06:1.04,dashed] plot (\x, {(1/16)/\x});
    \draw[domain=0.04:0.78,dashed] plot (\x, {(1/32)/\x});
    \draw[domain=0.03:0.52,dashed] plot (\x, {(1/64)/\x});
    \draw[fill=black] (0.25, 0.25) circle(0.02);
    \end{tikzpicture}
    \caption{
        Sketch of joint problem~\eqref{model:example1_joint_problem} for example of \Cref{sec:another_example}.
        Contours of the objective function are dashed lines.
    }
    \label{fig:another_example}
\end{figure}

We consider a two-player game defined by nonconvex players controlling a single continuous decision variable
\begin{equation*}
\begin{aligned}
g_1^*(x) &= \min_{y_1} \set{ -2 y_1 x_2 : y_1^r + x_2^r \leq 1, y_1 \in [0,1] }, \\
g_2^*(x) &= \min_{y_2} \set{  x_1 y_2 : x_1^r + y_2^r \leq 1, y_2 \in [0,1] }, 
\end{aligned}
\end{equation*}
where
$r = \tfrac{1}{2}$,
and
$G = \set{(x_1, x_2, y_1, y_2) : x_1 = y_1, x_2 = y_2}$.
Note that each player minimizes a linear function subject to a nonconvex constraint since $r \in (0,1)$.
The above formulation is simply putting a generalized game between the two players
\begin{equation*}
\begin{aligned}
& \min_{y_1} \set{ -2y_1 y_2 : y_1^r + y_2^r \leq 1, y_1 \in [0,1] }, \\
& \min_{y_2} \set{  y_1 y_2 : y_1^r + y_2^r \leq 1, y_2 \in [0,1] }, 
\end{aligned}
\end{equation*}
into the form required by \Cref{defn:gne}.

To initialize the $F^L$ set in \Cref{alg:CG}, a reasonable approach might be to solve the ``joint'' optimization problem
(combining the constraints and summing the objectives of the players in their simplified form):
\begin{equation} \label{model:example1_joint_problem}
\min_{y} \set{ - y_1 y_2 : y_1^r + y_2^r \leq 1, y \in [0,1]^2 }.
\end{equation}
The optimal solution of this problem is $\group{\tfrac{1}{4}, \tfrac{1}{4}}$
(see \Cref{fig:another_example}),
which we use to initialize the set $F^L$.

The lower bounding problem, with the single cut from $F^L$, is
\begin{equation*}
\begin{aligned}
\min_{x,y,w}\; & -2y_1 x_2 + x_1 y_2 - w \\
\st
& w \le -2y_1' x_2 + x_1 y_2', \quad \forall y' \in \set{(\sfrac{1}{4}, \sfrac{1}{4})}, \\
& y_1^r + x_2^r \le 1, \\
& x_1^r + y_2^r \le 1, \\
& y \in [0,1]^2, \\
& x_1 = y_1, x_2 = y_2.
\end{aligned}
\end{equation*}
However, this simplifies to
\[
\min_{y_1, y_2, w} \set{
    -y_1 y_2 - w : 
    w \le -(\sfrac{1}{2}) y_2 + (\sfrac{1}{4}) y_1,
    y_1^r + y_2^r \le 1,
    y \in [0,1]^2
}.
\]
The solution is $(1, 0, \sfrac{1}{4})$ with optimal value $-\frac{1}{4}$.
The lower-level problem, using the dummy variable formulation to implicitly represent the projection, is
\begin{equation*}
\begin{aligned}
g^N(1,0) =
\min_{x_1',x_2', y_1',y_2'}\; & -2 (y_1' \cdot 0) + 1 \cdot y_2' \\
\st
& (y_1')^r + (x_2')^r \le 1, \\
& (x_1')^r + (y_2')^r \le 1, \\
& y' \in [0,1]^2, \\
& x_1' = y_1', x_2' = y_2',
\end{aligned}
\end{equation*}
which also simplifies to
\[
g^N(1,0) = \min_{y_1',y_2'} \set{ y_2' : (y_1')^r + (y_2')^r \le 1, y' \in [0,1]^2 }.
\]
The set of optimal solutions to $g^N$ are $\{(y_1',y_2') = (\gamma, 0) : \gamma \in [0,1]\}$ with optimal value $g^N(\gamma,0) = 0$.
We have $\frac{1}{4} = w \not \le g^N(\gamma,0) = 0$, implying that we should update $F^L$. However, the upper bound becomes $\delta^U = -2(1\cdot 0) + (1\cdot 0) - g^N(\gamma,0) = 0$, indicating that the disequilibrium is zero and that an equilibrium has been found.
Thus, we terminate at Step~\ref{step:term} in \Cref{alg:CG} with solution $y^* = (1,0)$ (and $x^* = y^*$).
\Cref{alg:CG} only requires one full iteration to terminate.

\subsection{Game with discretely-constrained Nash-Cournot knapsack players}
\label{sec:knapsack}

Consider a multi-player discretely-constrained Nash-Cournot game in which player $j \in \mc{J}$ solves
\begin{subequations} \label{model:knapsack_NCgame}
\begin{align}
\min_{y_j}\; &
    \sum_{l \in \mc{L}_j}
        \group{ 
            c_{jl}
            - \group{\alpha_l - \beta_l \smallsum_{i \in \mc{J}} y_{il}}
        } 
        y_{jl} \\
\st
&\sum_{l \in \mc{L}_j} a_{jl} y_{jl} \leq b_j, \\
\label{con:shared}
&\sum_{i \in \mc{J}} d_{il} y_{il} \leq e_l, \quad \forall l \in \mc{L}, \\
&y_j \in \set{0,1}^{n_j},
\end{align}
\end{subequations}
where
$\alpha_l > 0$ and $\beta_l > 0$ represent the parameters of a linear inverse demand function of market $l$,
$\mc{L}_j$ denotes the set of markets available to player $j$, 
$c_{jl}$ and $a_{jl}$ denote the cost incurred and resources consumed, respectively, when player $j$ chooses to participate in market $l$, and 
$b_j$ is the total resource amount available to player $j$.
Meanwhile, constraints~\eqref{con:shared} are a set of ``shared'' constraints, known to all players, which limit the total resources (summed over all players) that can be consumed in market $l$.
\citet{papageorgiou2023pooling} described a variant of this game without shared constraints.
Each player has binary decision variables and solves a multi-knapsack problem. Note that, while each player's objective function has a quadratic term $y_{jl}^2$, it reduces to a linear objective since $y_{jl}^2=y_{jl}$ when $y_{jl} \in \{0,1\}$. 

First, for simplicity, assume that $\mc{L}_j = \mc{L}$ for all $j$;
that is, each player participates in each market.
Second, rather than put this game into the form required by \Cref{defn:gne}, we follow the logic from the example of \Cref{sec:another_example} to see that the lower bounding problem is
\begin{equation} \label{knapsack:lbp}
\begin{aligned}
\min_{y, w}\; & 
\sum_{j \in \mc{J}}
    \sum_{l \in \mc{L}}
        \group{ 
            c_{jl}
            - \alpha_l
            + \beta_l \smallsum_{i \in \mc{J}} y_{il} 
        } 
        y_{jl}
- w \\
\st
&\smallsum_{l \in \mc{L}} a_{jl} y_{jl} \leq b_j, \quad \forall j \in \mc{J}, \\
&\smallsum_{j \in \mc{J}} d_{jl} y_{jl} \leq e_l, \quad \forall l \in \mc{L}, \\
&y_{jl} \in \set{0,1}, \quad \forall (j,l) \in \mc{J} \times \mc{L}, \\
&w \le 
\smallsum_{j \in \mc{J}} \smallsum_{l \in \mc{L}}
    \group{
        \group{
            c_{jl}
            - \alpha_l
            + \beta_l \smallsum_{i \neq j} y_{il} 
        } 
        y_{jl}'
        + \beta_l (y_{jl}')^2
    },
    \quad \forall y' \in F^L.
\end{aligned}
\end{equation}
It is easy to see that the lower-bounding problem~\eqref{knapsack:lbp} is linearly constrained;
introducing the variables $z_l = \sum_{i} y_{il}$ and writing the objective as
\[
\smallsum_{j,l} c_{jl} y_{jl}
+
\smallsum_{l}
    \group{
        - \alpha_l
        + \beta_l z_l
    } 
    z_l
- w
\]
it is clear that the lower bounding problem may be solved as a mixed-integer (convex) quadratic program (MIQP).
Meanwhile the lower-level problem is
\begin{equation} \label{knapsack:llp}
\begin{aligned}
g^N(y) =
\min_{y'}\; & 
\sum_{j \in \mc{J}}
    \sum_{l \in \mc{L}} \group{ 
        \group{
            c_{jl}
            - \alpha_l
            + \beta_l \smallsum_{i \neq j} y_{il} 
        } 
        y_{jl}'
        + \beta_l(y_{jl}')^2
    } \\
\st
&\smallsum_{l \in \mc{L}} a_{jl} y_{jl}' \leq b_j, \quad \forall j \in \mc{J}, \\
&\smallsum_{j \in \mc{J}} d_{jl} y_{jl}' \leq e_l, \quad \forall l \in \mc{L}, \\
&y_{jl}' \in \set{0,1}, \quad \forall (j,l) \in \mc{J} \times \mc{L}.
\end{aligned}
\end{equation}
Recalling that $x^2 = x$ when $x \in \set{0,1}$, we can write the objective of the LLP~\eqref{knapsack:llp} as 
\[
\smallsum_{j \in \mc{J}}
    \smallsum_{l \in \mc{L}}
    \group{
        c_{jl}
        - \alpha_l 
        + \beta_l
        + \beta_l \smallsum_{i \neq j} y_{il}
    } 
    y_{jl}'.
\]
Consequently, it is easy to see that the LLP is a 0--1 integer linear program.

For our computational experiments, we generate 100 random instances involving $|\mc{J}|$ players and $|\mc{L}|$ markets for each choice of $|\mc{J}| \in \set{5,25,50}$ and $|\mc{L}| \in \set{10,20}$.
Thus, we solve 600 total instances.
Let $\gamma=10^3$ and assume that the function $\texttt{randint}(a,b)$ generates a random integer between $a$ and $b$, inclusive.
Then, we generate random (integer) parameters for multi-player knapsack instances as follows:
$\alpha_l = \texttt{randint}(1,|\mc{J}|\gamma), \forall l \in \mc{L}$;
$\beta_l = \texttt{randint}(1,\gamma), \forall l \in \mc{L}$;
$c_{jl} = \texttt{randint}(1,\gamma), \forall (j,l) \in \mc{J} \times \mc{L}$;
$a_{jl} = \texttt{randint}(1,\gamma), \forall (j,l) \in \mc{J} \times \mc{L}$;
$b_{j} = \max\set{a_{jl} : l \in \mc{L}}, \forall j \in \mc{J}$;
$d_{jl} = \texttt{randint}(1,\gamma), \forall (j,l) \in \mc{J} \times \mc{L}$;
$e_{l} = \max\set{ d_{jl} : j \in \mc{J} }, \forall l \in \mc{L}$.
Note that these values of $b_j$ and $e_l$ ensure that each player can participate in any market.
We formulate each instance using \texttt{cvxpy} \cite{diamond2016cvxpy} and solve them using \texttt{pyscipopt} \cite{MaherMiltenbergerPedrosoRehfeldtSchwarzSerrano2016}.
As in the previous example, we initialize the set $F^L$ by solving the ``joint'' optimization problem in which we combine/concatenate the constraints and sum the objectives of the players in their simplified form.
We set $\epsilon = 0.01$.

\Cref{fig:hist-knapsack} shows histograms of the number of iterations required to reach a normalized Nash equilibrium.
The first observation is that all instances have an equilibrium
(or, more precisely, a point with normalized disequilibrium below the tolerance $\epsilon$).
On these instances, \Cref{alg:CG} typically only needs one iteration to converge;
however, more than one iteration is required on several instances, but rarely more than ten iterations.

\begin{figure} [h!]
\centering
\begin{subfigure}[b]{0.49\textwidth}
\includegraphics[width=\textwidth]{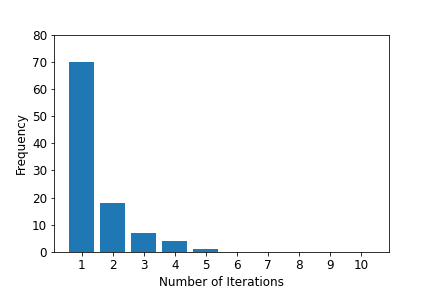}
\caption{5,10}
\end{subfigure}
\begin{subfigure}[b]{0.49\textwidth}
\includegraphics[width=\textwidth]{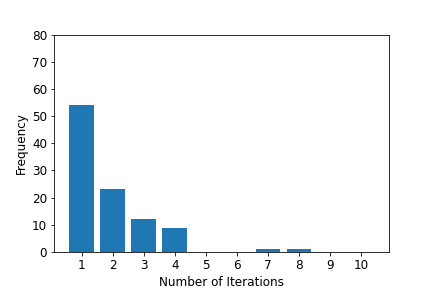}
\caption{5,20}
\end{subfigure}
\begin{subfigure}[b]{0.49\textwidth}
\includegraphics[width=\textwidth]{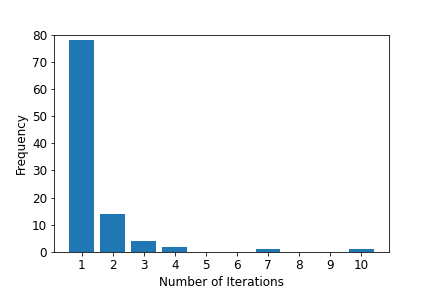}
\caption{25,10}
\end{subfigure}
\begin{subfigure}[b]{0.49\textwidth}
\includegraphics[width=\textwidth]{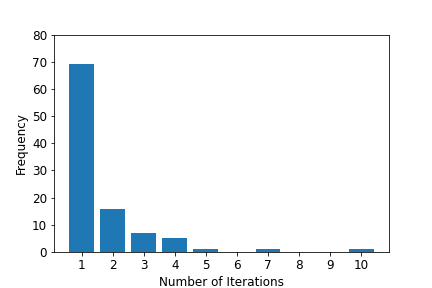}
\caption{25,20}
\end{subfigure}
\begin{subfigure}[b]{0.49\textwidth}
\includegraphics[width=\textwidth]{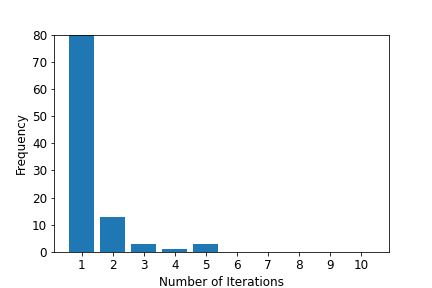}
\caption{50,10}
\end{subfigure}
\begin{subfigure}[b]{0.49\textwidth}
\includegraphics[width=\textwidth]{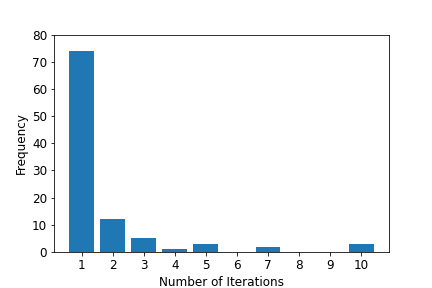}
\caption{50,20}
\end{subfigure}
\caption{Histograms of the number of major iterations needed by Algorithm~\ref{alg:CG} to converge to a normalized Nash equilibrium. Each subcaption lists the (number of players, number of markets) tuple $|\mc{J}|,|\mc{L}|$. 100 instances were solved for each $|\mc{J}|,|\mc{L}|$ pair.}
\label{fig:hist-knapsack}
\end{figure}

\Cref{fig:knapsack_solution_time} shows the mean and max time per iteration over 100 instances for different player-market configurations of $|\mc{J}|$ and $|\mc{L}|$.
To be clear, one iteration includes all, or (if a termination criterion is satisfied) a subset, of the steps in the main \textbf{loop} of \Cref{alg:CG}.
As expected, the time per iteration increases in the number $|\mc{J}|$ of players and the number $|\mc{L}|$ of markets.
Even though the lower bounding problem~\eqref{knapsack:lbp} and the lower-level problem~\eqref{knapsack:llp} each have $|\mc{J}||\mc{L}|$ binary decision variables, the solver has no difficulty optimizing these instances even when there are 1000 binary decision variables with $|\mc{J}|=50$ and $|\mc{L}|=20$.    

\begin{figure} 
\centering
\includegraphics[width=12cm]{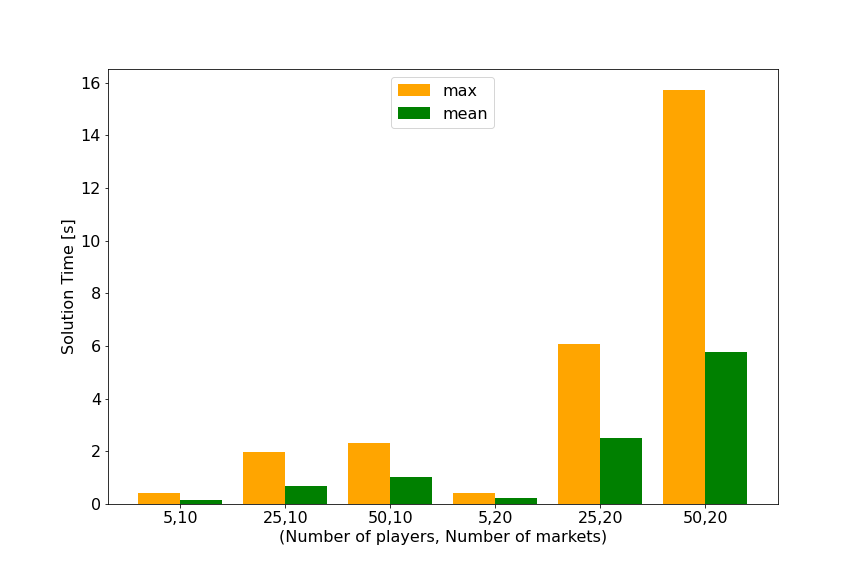}
\caption{Max and mean solution time per iteration for multi-player knapsack instances. }
\label{fig:knapsack_solution_time}
\end{figure} 

We point out that it is a somewhat surprising result that every instance has an equilibrium;
we are not aware of any theoretical results that would explain why this general class of games (of which we have generated 600 random instances) would have an equilibrium.
The work by \citet{harks2024generalized} seems the most relevant;
the essence of those results is that a point is an equilibrium if and only if it is an equilibrium for a game where the players are replaced by versions involving the convex hull of their strategy sets.
It is plausible that those results apply to this game involving knapsack players.
This merits future investigation.

As far as alternative methods are concerned, in the appendix, we show that a popular framework, known as the complementarity approach, need not produce a Nash equilibrium to this game due to the players' discrete (hence, nonconvex) decisions.
Even the numerical methods proposed in \cite{harks2024generalized} do not quite apply, as they focused on special cases, such as when a tight formulation of the players is known a priori
(that is, when the continuous relaxations of the players' strategy sets coincides with their convex hulls).

\section{Conclusions and future research directions} \label{sec:conclusions}
In this work we have extended the minimum disequilibrium concept from \cite{harwood2024equilibrium} to minimum normalized disequilibrium.
A minimum normalized disequilibrium yields a normalized Nash equilibrium when the latter exists.
These results have been established without need to assume any convex structure in the players.
Just as normalized Nash equilibrium is a potentially easier-to-find solution, minimum normalized disequilibrium is computationally more tractable to find than minimum disequilibrium.
Moreover, our approach is exact; it is not a heuristic.

There are numerous opportunities for future research.
Revisiting the theory of \cite{harks2024generalized} would be worthwhile;
whereas that theory might identify classes of nonconvex games that have an equilibrium, the algorithm proposed in this work would actually be able to find them.
Extending the proposed method to find generalized Nash equilibria would also be valuable.
It may be beneficial to create a hybrid approach in which heuristic methods are coupled with our exact approach as was done in \citep{papageorgiou2023pooling}.
Distributed algorithms for GNE and NNE problems could also apply in our setting.

\appendix
\section{Failure of complementarity for nonconvex NNE}


A popular approach to solving convex games is via complementarity, whereby one writes the KKT conditions for each player, concatenates all of these conditions, and then invokes an algorithm, e.g.~the PATH method \cite{dirkse1995path}, to solve the resulting complementarity system.
However, when the convexity assumption does not hold, this approach is an unreliable heuristic at best.
In this section, we show that the complementarity approach need not produce a generalized Nash equilibrium when players have discrete decisions.
Indeed, we show that \textit{any} feasible solution would satisfy the KKT conditions of the complementarity approach.  

For concreteness, we revisit the example from \Cref{sec:knapsack}.
To arrive at a complementarity formulation, we take advantage of two insights. 
First, we replace each binary variable $y_{jl}$ with a continuous variable $y_{jl} \in [0,1]$ subject to the additional complementarity constraint $y_{jl}(1-y_{jl}) = 0$. 
Thus, we have players defined by continuous nonlinear programs with differentiable objectives and constraint functions.
The KKT conditions are at least necessary for an optimum of such problems, but, as we shall see, trivially so.
Second, with all decision variables now being continuous and all but the complementarity constraints $y_{jl}(1-y_{jl}) = 0$ being linear, we follow a complementarian's approach by attempting to exploit a result from Harker \cite{harker1991generalized} who demonstrated that, when all constraints are linear, a ``VI solution'' (in our case, a complementarity solution) can be obtained by giving the same dual variables to the common constraints. 
Applying these two results leads to the nonconvex continuous formulation for each player $j$:
\begin{subequations} \label{model:knapsack_nlp_NCgame}
\begin{align}
p_j^*(y_{-j}) = 
\max_{y_j}\;& 
    \sum_{l \in \mc{L}_j} \group{ 
        \group{ 
            \alpha_l - \beta_l \smallsum_{i \in \mc{J}} y_{il} 
        } 
        y_{jl} - c_{jl} y_{jl} 
    }
    = p_j(y_{j},y_{-j})
    & & \quad \textrm{\underline{Dual vars}} \\ 
\st
& \sum_{l \in \mc{L}_j} a_{jl} y_{jl} \leq b_j,
    & & \quad \pi_j \ge 0, \\
& y_{jl}(1 - y_{jl}) = 0,
    & & \quad \gamma_{jl} \in \mbb{R}, \quad \forall l \in \mc{L}_j, \\	
& -y_{jl} \ge -1,
    & & \quad \mu_{jl} \ge 0, \quad \forall l \in \mc{L}_j, \\
& y_{jl} \ge 0,
    & & \quad \nu_{jl} \ge 0, \quad \forall l \in \mc{L}_j, \\
& \sum_{i \in \mc{J}} d_{il} y_{il} \le e_l,
    & & \quad \omega_{l} \ge 0, \quad \forall l \in \mc{L}.
\end{align}
\end{subequations}

The corresponding Lagrangian function for each player $j$ is
\begin{multline*}
L_j(y_j, \pi_j, \gamma_j, \mu_j, \nu_j) =
\sum_{l \in \mc{L}_j} \group{
    \group{
        \alpha_l - \beta_l \smallsum_{i \in \mc{J}} y_{il}
    } y_{jl} - c_{jl} y_{jl}
}
+ \pi_j \group{ b_j - \smallsum_{l \in \mc{L}_j} a_{jl} y_{jl} }
\\ 
+ \sum_{l \in \mc{L}_j} \gamma_j y_{jl}(1 - y_{jl})
+ \sum_{l \in \mc{L}_j} \mu_{jl} (1 - y_{jl})
+ \sum_{l \in \mc{L}_j} \nu_{jl} y_{jl}
+ \sum_{l \in \mc{L}} \omega_{l} \group{ e_l - \smallsum_{i \in \mc{J}} d_{il} y_{il} }. 
\end{multline*}

The necessary, but not sufficient, first-order KKT optimality conditions associated with the nonlinear model~\eqref{model:knapsack_nlp_NCgame} when all players are represented by a monolithic system of equations are
\begin{subequations} \label{model:knapsack_nlp_kkt_NCgame}
\begin{align}
    \alpha_l - c_{jl} - \beta_l \sum_{i \neq j} y_{il} - 2 \beta_l y_{jl} 
    - \pi_j a_j
    + \gamma_{jl}
    - 2\gamma_{jl} y_{jl}
    - \mu_{jl}
    + \nu_{jl}
    - \omega_l d_{jl} = 0,
        &\quad \forall l \in \mc{L}_j, j \in \mc{J}, \\ 
0 \le b_j - \sum_{l \in \mc{L}_j} a_{jl} y_{jl} \perp \pi_j \ge 0,
        &\quad \forall j \in \mc{J}, \\
\gamma_{jl} y_{jl}(1 - y_{jl}) = 0,
        &\quad \forall l \in \mc{L}_j, j \in \mc{J}, \\ 
0 \le 1 - y_{jl} \perp \mu_{jl} \ge 0,
        &\quad \forall l \in \mc{L}_j, j \in \mc{J}, \\ 
0 \le y_{jl} \perp \nu_{jl} \ge 0,
        &\quad \forall l \in \mc{L}_j, j \in \mc{J}, \\ 
0 \le e_l - \sum_{i \in \mc{J}} d_{il} y_{il} \perp \omega_l \ge 0,
        &\quad \forall l \in \mc{L}.
\end{align}
\end{subequations}

The following proposition shows that if one were to choose \emph{any} feasible solution $\hat{y}_j$ to player $j$'s binary knapsack problem~\eqref{model:knapsack_NCgame} and concatenate all such solutions into a single decision vector
$\hat{y}=(\hat{y}_1,\dots,\hat{y}_N) \in \set{0,1}^{\sum_j n_j}$ satisfying $\sum_{i \in \mc{J}} d_{il} \hat{y}_{il} \le e_l$ for all $l \in \mc{L}$, then $\hat{y}$ trivially satisfies the KKT conditions~\eqref{model:knapsack_nlp_kkt_NCgame}.
More importantly, $\hat{y}$ need not satisfy the requirement $p_j^*(\hat{y}_{-j}) \ge p_j(y_j, \hat{y}_{-j})$
for all $j \in \mc{J}$ and $y_j \in \set{0,1}^{n_j}$ satisfying 
$\sum_{l \in \mc{L}_j} a_{jl} \hat{y}_{jl} \leq b_j$
and
$\sum_{i \in \mc{J}} d_{il} y_{il} \le e_l$
for all $l \in \mc{L}$, which, of course, is the definition of a generalized Nash equilibrium.
In short, solving \eqref{model:knapsack_nlp_kkt_NCgame} directly gives no guarantee that an equilibrium has been found.

\begin{proposition}
Let $\hat{y}_j$ be any feasible solution to player $j$'s binary knapsack problem~\eqref{model:knapsack_NCgame} such that
$\hat{y} = (\hat{y}_1, \dots, \hat{y}_N)$
satisfies
$\sum_{i \in \mc{J}} d_{il} \hat{y}_{il} \le e_l$
for all $l \in \mc{L}$.
Then there exists 
$(\hat{\pi},\hat{\gamma},\hat{\mu},\hat{\nu},\hat{\omega})$ 
such that 
$(\hat{y},\hat{\pi},\hat{\gamma},\hat{\mu},\hat{\nu},\hat{\omega})$
is a feasible solution to the KKT conditions \eqref{model:knapsack_nlp_kkt_NCgame}. 
\end{proposition}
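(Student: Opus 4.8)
The plan is to write down the multipliers explicitly; there is essentially nothing to optimize. Two observations drive everything. First, since $\hat y_{jl}\in\{0,1\}$, the product $\hat y_{jl}(1-\hat y_{jl})$ vanishes, so the constraint $\gamma_{jl}\hat y_{jl}(1-\hat y_{jl})=0$ holds \emph{for every} value of $\gamma_{jl}$; the equality multiplier $\gamma_{jl}$ is therefore entirely unconstrained by complementarity, and it is also sign-free by construction of the model. Second, in the stationarity row of \eqref{model:knapsack_nlp_kkt_NCgame} the coefficient of $\gamma_{jl}$ is $1-2\hat y_{jl}$, which equals $+1$ or $-1$ and in particular is never $0$. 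Hence once the other multipliers are fixed, stationarity becomes a single linear equation in $\gamma_{jl}$ with nonzero leading coefficient, which can always be solved.

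First I would set $\hat\pi_j=0$ for all $j$ and $\hat\omega_l=0$ for all $l$. Then the sign conditions $\hat\pi_j\ge 0$, $\hat\omega_l\ge 0$ hold, and the complementarity pairs $0\le b_j-\sum_{l}a_{jl}\hat y_{jl}\perp\hat\pi_j\ge 0$ and $0\le e_l-\sum_{i}d_{il}\hat y_{il}\perp\hat\omega_l\ge 0$ are satisfied, because the right factor is zero while the left factor is nonnegative by the feasibility hypothesis on $\hat y$ (knapsack feasibility of each $\hat y_j$ and the shared constraints). Likewise set $\hat\mu_{jl}=\hat\nu_{jl}=0$; since $\hat y_{jl}\in\{0,1\}$, the quantities $1-\hat y_{jl}$ and $\hat y_{jl}$ lie in $\{0,1\}\subset\bar{\mbb{R}}_+$, so $0\le 1-\hat y_{jl}\perp\hat\mu_{jl}\ge 0$ and $0\le\hat y_{jl}\perp\hat\nu_{jl}\ge 0$ hold trivially, as does $\hat\gamma_{jl}\hat y_{jl}(1-\hat y_{jl})=0$ for whatever $\hat\gamma_{jl}$ we end up choosing.

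With these choices the only remaining requirement is the stationarity equation, which collapses to $\alpha_l - c_{jl} - \beta_l\sum_{i\ne j}\hat y_{il} - 2\beta_l\hat y_{jl} + \hat\gamma_{jl}(1-2\hat y_{jl})=0$. Because $1-2\hat y_{jl}\in\{1,-1\}$, I would simply define $\hat\gamma_{jl} = -\bigl(\alpha_l - c_{jl} - \beta_l\sum_{i\ne j}\hat y_{il} - 2\beta_l\hat y_{jl}\bigr)\big/\bigl(1-2\hat y_{jl}\bigr)$, a well-defined real number, so $\hat\gamma_{jl}\in\mbb{R}$ as required. Assembling, $(\hat y,\hat\pi,\hat\gamma,\hat\mu,\hat\nu,\hat\omega)$ satisfies every condition of \eqref{model:knapsack_nlp_kkt_NCgame}. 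The hard part, if one can call it that, is purely conceptual: recognizing that the sign-unrestricted multiplier $\gamma_{jl}$ attached to the nonconvex equality $y_{jl}(1-y_{jl})=0$, together with the nonvanishing of $1-2\hat y_{jl}$ on $\{0,1\}$, renders the KKT system vacuous. One should double-check the paper's sign convention for the stationarity row (it is written for the maximization form of \eqref{model:knapsack_nlp_NCgame}), but this only affects the formula for $\hat\gamma_{jl}$, not the argument.
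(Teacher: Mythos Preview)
Your proof is correct and follows essentially the same approach as the paper: both exploit that the sign-free multiplier $\gamma_{jl}$ has nonzero coefficient $1-2\hat y_{jl}\in\{-1,1\}$ in the stationarity equation and is unconstrained by complementarity since $\hat y_{jl}(1-\hat y_{jl})=0$. Your choice of setting $\hat\pi,\hat\omega,\hat\mu,\hat\nu$ all to zero is simply the cleanest special case of the paper's construction, which allows arbitrary nonnegative values for those multipliers whose associated constraints are active.
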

\begin{proof}
Consider any feasible binary solution
$\hat{y}=(\hat{y}_1, \dots, \hat{y}_N) \in \{0,1\}^{\sum_j n_j}$ 
satisfying
$\sum_{l \in \mc{L}_j} a_{jl} \hat{y}_{jl} \le b_j$
for all $j \in \mc{J}$ and
$\sum_{i \in \mc{J}} d_{il} \hat{y}_{il} \leq e_l$
for all $l \in \mc{L}$.
For all $j \in \mc{J}$, let $\hat{\pi}_j = 0$ if
$\sum_{l \in \mc{L}_j} a_{jl} \hat{y}_{jl} < b_j$
and $\hat{\pi}_j \ge 0$ otherwise.
For all $l \in \mc{L}$, let $\hat{\omega}_l = 0$ if
$\sum_{i \in \mc{J}} d_{il} \hat{y}_{il} < e_l$
and $\hat{\omega}_l \ge 0$ otherwise.
If $\hat{y}_{jl} = 0$, let $\hat{\mu}_{jl} = 0$, and let $(\hat{\nu}_{jl}, \hat{\gamma}_{jl})$
satisfy
$\hat{\nu}_{jl} \ge 0$ and
$\hat{\gamma}_{jl} = -\group{\alpha_l - c_{jl} - \beta_l \sum_{i \neq j} \hat{y}_{il} - \hat{\pi}_j a_j + \hat{\nu}_{jl} - \hat{\omega}_l d_{jl}}$.  
If $\hat{y}_{jl} = 1$, let $\hat{\nu}_{jl} = 0$, and let $(\hat{\mu}_{jl},\hat{\gamma}_{jl})$
satisfy
$\hat{\mu}_{jl} \ge 0$ and
$\hat{\gamma}_{jl} = \alpha_l - c_{jl} - \beta_l \sum_{i \neq j} \hat{y}_{il} - 2\beta_l - \hat{\pi}_j a_j - \hat{\mu}_{jl} - \hat{\omega}_l d_{jl}$.
Then $(\hat{y},\hat{\pi},\hat{\gamma},\hat{\mu},\hat{\nu},\hat{\omega})$ is a solution of the system of equations~\eqref{model:knapsack_nlp_kkt_NCgame}.
\end{proof}


\bibliography{main}
\end{document}